\newtheorem{thm}{Theorem}[section]
\newtheorem{thmLetter}{Theorem}
\newtheorem{cor}[thm]{Corollary}
\newtheorem{prop}[thm]{Proposition}
\newtheorem{lem}[thm]{Lemma}
\theoremstyle{definition}
\newtheorem{defn}[thm]{Definition}
\theoremstyle{remark}
\newtheorem{rem}[thm]{Remark}
\let\c@equation\c@thm
\numberwithin{equation}{section}
\newcommand{\IC}{\mathbb{C}}
\newcommand{\IP}{\mathbb{P}}
\newcommand{\IQ}{\mathbb{Q}}
\newcommand{\IR}{\mathbb{R}}
\newcommand{\IZ}{\mathbb{Z}}
\newcommand{\sA}{\mathcal{A}}
\newcommand{\sB}{\mathcal{B}}
\newcommand{\sC}{\mathcal{C}}
\newcommand{\sM}{\mathcal{M}}
\newcommand{\sP}{\mathcal{P}}
\newcommand{\sT}{\mathcal{T}}
\newcommand{\sU}{\mathcal{U}}
\newcommand{\sV}{\mathcal{V}}
\newcommand{\gap}{\hspace{15pt}}
\DeclareMathOperator{\Aut}{Aut}
\DeclareMathOperator{\Fix}{Fix}
\DeclareMathOperator{\Mod}{\Gamma}
\DeclareMathOperator{\Teich}{\sT}
\newcommand{\abc}{\mathcal{ABC}}
\begin{document}

\title{On the topology of moduli spaces of real algebraic curves}

\author{Alex Pieloch}
\address{Department of Mathematics\\ Columbia University\\ 2990 Broadway \\
New York, NY 10027}
\email{pieloch@math.columbia.edu}

\thanks{The author is supported by the National Science Foundation Graduate Student Fellowship Program through grant DGE 16-44869. He was also partially supported by the National Science Foundation through grant DMS-1406420 during the Summer of 2017 during which this paper was written.}


\maketitle

\begin{abstract}
We show that mapping class groups associated to all types of real algebraic curves are virtual duality groups.  We also deduce some results about the orbifold homotopy groups of the moduli spaces of real algebraic curves.  We achieve these results by defining a new complex associated to a not necessarily orientable surface with boundary called the $\abc$-complex.  This complex encodes the intersection patterns of isotopy classes of essential simple arcs, boundary components, and essential simple closed curves.  We show that this complex is homotopy equivalent to an infinite wedge of spheres; its dimension is dependent on the topological type of the surface.  
\end{abstract}

\tableofcontents

\section{Introduction}\label{Intro}
We are interested in studying the topology of moduli spaces of real algebraic curves.  In this paper, we define a real algebraic curve to be a pair $(X,\sigma)$ where $X$ is a smooth, compact, connected Riemann surface and $\sigma$ is an anti-holomorphic involution of $X$.  A $2n$-pointed real algebraic curve is a real algebraic curve $(X,\sigma)$ along with finite collection of $2n$ marked points such that $\sigma$ acts freely on the collection of marked points.  Up to conjugation by homeomorphisms there is a finite number of topological types of such orientation reversing involutions.

Forgetting the complex structure of a Riemann surface $X$ produces a real oriented surface $S$.  Similarly, forgetting the real structure of a real algebraic curve produces a real oriented surface along with an orientation reversing involution.  When one studies the moduli spaces of Riemann surfaces, it is natural to define the mapping class group of a real oriented surface $S$ of genus $g$ with $n$ marked points, which we denote by $\Mod_{g,n}$.  In our case, it is natural consider the mapping class group of a real oriented surface $S$ of genus $g$ with $2n$ marked points and an orientation reversing involution $\sigma$, denoted $\Mod_{g,2n}^\sigma$.  It is defined to be the group of isotopy classes of orientation preserving diffeomorphisms of $S$ that commute with $\sigma$ and leave the collection of marked points invariant.  Harer \cite{Harer_VirtualCohomDim} showed that $\Mod_{g,n}$ is a virtual duality group in the sense of Bieri and Eckmann \cite{BieriEckmann_GroupHomDuality}.  Our first main result shows that the mapping class group associated to each topological type of real algebraic curve, that is, each $\Mod_{g,2n}^\sigma$, is also a virtual duality group.

To prove this result, we employ a strategy similar to that of Harer's.  For the case of $\Mod_{g,n}$, Harer studied the topology of the curve complex of a surface $S$ of genus $g$ with $n$ marked points, denoted $\sC(S)$.  To prove the result for $\Mod_{g,2n}^\sigma$, we consider the simplicial complex $\sC^\sigma(S)$ whose set of $k$-simplices consists of ($k+1$)-tuples of $\sigma$-orbits of isotopy classes of $\sigma$-invariant\footnote{A curve $\gamma$ in $S$ is \emph{$\sigma$-invariant} if $\sigma(\gamma)$ and $\gamma$ may be realized disjointly.  Notice that simple closed curves fixed by $\sigma$ are $\sigma$-invariant since two simple closed curves in the same isotopy class may be realized disjointly.}  essential simple closed curves in $S$ that may be realized disjointly.  The complex $\sC^\sigma(S)$ may be regarded as a tits building associated to real algebraic curves of type $\sigma$.  The main technical result of this paper is computing the homotopy type of this complex, which we show to be an infinite wedge of spheres of a particular dimension dependent on the topological type of the pair $(S,\sigma)$.  From this, we obtain our first main result.

\begin{thmLetter}\label{thm:InvolutionDualityGroup}
Let $S$ be a surface of genus $g \geq 3$ with $2n$ marked points and an orientation reversing involution $\sigma$ that has $0 \leq k \leq g+1$ fixed circles.  The group $\Mod_{g,2n}^\sigma$ is a virtual duality group of dimension $vcd(g,n,k)$ where 
\[ vcd(g,n,k) = \begin{cases} 
2g-3 & \mbox{if     }\,\,\,n=0, \,\,\,k=0, \\
2g+n-2 & \mbox{if     }\,\,\, n>0, \,\,\, k=0,\\
g+n-2 & \mbox{if     } \,\,\,k=g+1,\\
2g+n-k-2 & \mbox{if     } \,\,\,0<k<g+1.\\
\end{cases}\]
\end{thmLetter}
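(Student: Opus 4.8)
\section*{Proof proposal for Theorem~\ref{thm:InvolutionDualityGroup}}

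The plan is to adapt Harer's proof that $\Mod_{g,n}$ is a virtual duality group \cite{Harer_VirtualCohomDim} to the real setting, with the complex $\sC^\sigma(S)$ playing the role that the curve complex plays there. First I would exhibit a contractible space on which $\Mod_{g,2n}^\sigma$ acts properly with finite stabilizers: namely the fixed-point set $\Teich^\sigma(S)$ of the involution induced by $\sigma$ on the Teichm\"uller space $\Teich(S)$. Since $\sigma$ acts as a finite-order isometry, this fixed locus is itself a cell, and it is precisely the Teichm\"uller space of the quotient Klein surface $N = S/\sigma$, a (possibly non-orientable) surface whose boundary consists of the $k$ fixed circles of $\sigma$. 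By Serre's lemma, $\Mod_{g,2n}^\sigma$ contains a torsion-free finite-index subgroup $G$; the quotient $\Teich^\sigma(S)/G$ is then an aspherical manifold, a finite-dimensional $K(G,1)$, so $G$ is of type $FP$ and $\Mod_{g,2n}^\sigma$ has finite virtual cohomological dimension, bounded above by $\dim_{\IR}\Teich^\sigma(S)$.

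To upgrade this to a duality statement I would apply the Bieri--Eckmann criterion \cite{BieriEckmann_GroupHomDuality}: a group $G$ of type $FP$ is a duality group of dimension $m$ if and only if $H^i(G;\IZ G) = 0$ for $i \neq m$ and $H^m(G;\IZ G)$ is torsion-free. To access these groups I would build, in analogy with Harer, a Borel--Serre-type bordification $\overline{\Teich^\sigma}(S)$: a contractible manifold-with-corners of dimension $D = \dim_{\IR}\Teich^\sigma(S)$ carrying a free, now cocompact, $G$-action, and whose frontier $\partial\overline{\Teich^\sigma}(S)$ is $G$-equivariantly homotopy equivalent to $\sC^\sigma(S)$. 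Cocompactness gives $H^i(G;\IZ G) \cong H^i_c\big(\overline{\Teich^\sigma}(S)\big)$, and Poincar\'e--Lefschetz duality for the manifold-with-corners together with the long exact sequence of the pair then yields $H^i(G;\IZ G) \cong \widetilde{H}_{D-i-1}\big(\sC^\sigma(S)\big)$, using that $\overline{\Teich^\sigma}(S)$ is contractible.

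The decisive input is the main technical result, that $\sC^\sigma(S)$ is homotopy equivalent to an infinite wedge of spheres of a single dimension $d = d(g,n,k)$; granting it, the reduced homology of $\sC^\sigma(S)$ is free abelian and concentrated in degree $d$, so $H^i(G;\IZ G)$ vanishes unless $D - i - 1 = d$ and is torsion-free in the surviving degree $m = D - d - 1$. Bieri--Eckmann then gives that $G$, and hence $\Mod_{g,2n}^\sigma$, is a virtual duality group of dimension $m$. It remains to identify $m = D - d - 1$ with the four-case formula: I would compute $D = \dim_{\IR}\Teich^\sigma(S)$ and the sphere dimension $d$ in each regime --- the free involution case $n = k = 0$, the punctured case $n > 0,\, k = 0$, the maximal-symmetry case $k = g+1$ where $N$ is planar, and the generic case $0 < k < g+1$ --- and subtract. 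These topological quantities for the quotient surface $N$ are what produce the case division displayed in the statement.

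The hard part will be the homotopy-type computation of $\sC^\sigma(S)$, which is where essentially all the new work resides; the duality deduction above is formal once it is in hand. Unlike Harer's orientable curve complex, here one must analyze the $\abc$-complex of the non-orientable, boundaried quotient $N$, simultaneously recording essential arcs, boundary components, and essential closed curves, and then transport the result back to the $\sigma$-invariant curves in $S$ via the quotient map $S \to N$. Showing this complex is Cohen--Macaulay of the correct dimension --- highly connected and pure, so that it collapses to a wedge of $d$-spheres --- requires a careful induction on the complexity of $N$ in which arcs are used to cut down to simpler surfaces, and the genuinely different combinatorics when $k = g+1$ (no interior handles to cut along) versus $0 < k < g+1$ is exactly what forces the separate cases of the $\mathrm{vcd}$ formula.
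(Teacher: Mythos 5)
Your proposal is correct and follows essentially the same route as the paper: a torsion-free finite-index subgroup acting freely and cocompactly on a truncation (bordification) of the contractible fixed locus $\Teich_{g,2n}^\sigma$, identification of the boundary's homotopy type with $\sC^\sigma(S) \cong \abc(S/\sigma)$, the wedge-of-spheres computation for that complex as the key technical input, and the Bieri--Eckmann criterion to conclude duality of dimension $(3g-3+2n) - d - 1$. The only cosmetic difference is that you invoke the $H^i(G;\IZ G)$ formulation of Bieri--Eckmann together with Poincar\'e--Lefschetz duality, whereas the paper cites the packaged Bieri--Eckmann theorem for a compact orientable manifold-with-boundary Eilenberg--MacLane space, which is proved exactly that way.
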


The group $\Mod_{g,2n}^\sigma$ acts properly discontinuously and virtually freely on a contractible subspace of the Teichm\"{u}ller space of $S$.  This subspace may be thought of as the Teichm\"{u}ller space of a surface $S$ with an orientation reversing involution $\sigma$.  The quotient of this subspace by $\Mod_{g,2n}^\sigma$ is the moduli space of real algebraic curves of genus $g$ with $2n$ marked points and topological type of involution $\sigma$.  We denote it by $\sM_{g,2n}^\sigma$.  Since $\Mod_{g,2n}^\sigma$ acts virtually freely and properly discontinuously on $\sM_{g,2n}^\sigma$, we have that $\sM_{g,2n}^\sigma$ is an orbifold classifying space for $\Mod_{g,2n}^\sigma$ and for each $i$ we have an isomorphism
\[H^i(\Mod_{g,2n}^\sigma;\IQ) \cong H^i(\sM_{g,2n}^\sigma;\IQ).\]
Several fundamental properties of these moduli spaces have been investigated by Buser, Sepp\"{a}l\"{a}, and Silhol \cite{BuserSeppala_SymmetricPantsDecompostionsOfRiemannSurfaces,SeppalaSilhol_ModuliSpacesForRealAlgebraicCurvesAndRealAbelianVarieties,Seppala_ComplexAlgebraicCurvesWithRealModuli}.  Furthermore, the virtual Euler characteristics of these moduli spaces have been computed by Goulden, Harer, and Jackson \cite{GouldenHarerJackson_VirtualEulerChar}.

Using an ascending sequence of compact subspaces of $\sM_{g,2n}^\sigma$ that exhaust $\sM_{g,2n}^\sigma$, we are able to define a neighborhood of infinity of $\sM_{g,2n}^\sigma$, which we denote by $\sU_{g,2n}^\sigma$.  Using the connectivity of $\sC^\sigma(S)$, we show that the $m$th orbifold homotopy group of $\sM_{g,2n}^\sigma$ relative to infinity is trivial for all $m$ less than the homological dimension of $\sC^\sigma(S)$.  More precisely, we have our second main result.

\begin{thmLetter}\label{thm:OrbiHomotopyGroups}
Let $S$ be a surface of genus $g$ with $2n$ marked points and an orientation reversing involution $\sigma$ that has $0 \leq k \leq g+1$ fixed circles.  The inclusion $\sU_{g,2n}^\sigma \hookrightarrow \sM_{g,2n}^\sigma$ induces an isomorphism $\pi_m^{orb}(\sU_{g,2n}^\sigma) \cong \pi_m^{orb}(\sM_{g,2n}^\sigma)$ for each $m<d(g,n,k)$, where
\[ d(g,n,k) = \begin{cases} 
g-1 & \mbox{if     }\,\,\,n=0, \,\,\,k=0, \\
g+n-2 & \mbox{if     }\,\,\, n>0, \,\,\, k=0,\\
2g+n-2 & \mbox{if     }\,\,\,k=g+1,\\
g+n+k-2 & \mbox{if     }\,\,\,0<k<g+1.\\
\end{cases}\]

\end{thmLetter}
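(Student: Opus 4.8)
The plan is to lift the inclusion $\sU_{g,2n}^\sigma \hookrightarrow \sM_{g,2n}^\sigma$ to the orbifold universal cover and reduce the statement to the connectivity of $\sC^\sigma(S)$ established earlier. Write $\sT^\sigma$ for the contractible subspace of Teichm\"{u}ller space on which $\Mod_{g,2n}^\sigma$ acts properly discontinuously and virtually freely, so that $\sM_{g,2n}^\sigma = \sT^\sigma/\Mod_{g,2n}^\sigma$ has orbifold universal cover $\sT^\sigma$; consequently $\pi_1^{orb}(\sM_{g,2n}^\sigma) = \Mod_{g,2n}^\sigma$ and $\pi_m^{orb}(\sM_{g,2n}^\sigma) = \pi_m(\sT^\sigma) = 0$ for all $m \geq 2$. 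Let $\tilde\sU \subset \sT^\sigma$ be the $\Mod_{g,2n}^\sigma$-invariant preimage of $\sU_{g,2n}^\sigma$. Since the higher orbifold homotopy groups of $\sU_{g,2n}^\sigma$ coincide with the ordinary homotopy groups of this cover, $\pi_m^{orb}(\sU_{g,2n}^\sigma) = \pi_m(\tilde\sU)$ for $m \geq 2$, while $\pi_1^{orb}(\sU_{g,2n}^\sigma) = \Mod_{g,2n}^\sigma$ once $\tilde\sU$ is known to be simply connected. The theorem therefore reduces to showing that $\tilde\sU$ is $(d(g,n,k)-1)$-connected.

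The geometric heart of the proof is the $\Mod_{g,2n}^\sigma$-equivariant homotopy equivalence $\tilde\sU \simeq \sC^\sigma(S)$, which I would extract from the thick--thin decomposition of $\sT^\sigma$. A point of $\tilde\sU$ is a $\sigma$-invariant hyperbolic structure on $S$ carrying an essential simple closed curve of length below the Margulis constant; such short curves are pairwise disjoint and are permuted by $\sigma$, so they determine a collection of $\sigma$-orbits, that is, a simplex of $\sC^\sigma(S)$. For each vertex $v$ of $\sC^\sigma(S)$ let $U_v \subset \tilde\sU$ be the locus on which the associated $\sigma$-invariant curve is short. Working in real Fenchel--Nielsen coordinates adapted to a $\sigma$-invariant pants decomposition, one sees that $U_v$ --- and more generally every nonempty finite intersection $U_{v_0} \cap \cdots \cap U_{v_k}$ --- is contractible, and that such an intersection is nonempty precisely when $v_0,\dots,v_k$ span a simplex of $\sC^\sigma(S)$. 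The cover $\{U_v\}$ is locally finite and $\Mod_{g,2n}^\sigma$-invariant and has nerve $\sC^\sigma(S)$, so the nerve lemma gives the equivalence. Equivalently, one may pass to a $\sigma$-equivariant Borel--Serre bordification $\overline{\sT^\sigma}$ --- a contractible manifold with corners on which $\Mod_{g,2n}^\sigma$ acts cocompactly with boundary homotopy equivalent to $\sC^\sigma(S)$ --- and retract a collar of the boundary onto it. Along the way one verifies that this thin part agrees up to homotopy with the neighborhood of infinity supplied by the compact exhaustion.

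Granting $\tilde\sU \simeq \sC^\sigma(S)$, I invoke the main technical result, by which $\sC^\sigma(S)$ is homotopy equivalent to an infinite wedge of spheres of dimension $d(g,n,k)$ and is therefore $(d(g,n,k)-1)$-connected. Thus $\pi_m(\tilde\sU) = 0$ for all $m < d(g,n,k)$, and when $d(g,n,k) \geq 2$ the space $\tilde\sU$ is simply connected, so that $\pi_1^{orb}(\sU_{g,2n}^\sigma) = \Mod_{g,2n}^\sigma = \pi_1^{orb}(\sM_{g,2n}^\sigma)$. Together with $\pi_m^{orb}(\sU_{g,2n}^\sigma) = \pi_m(\tilde\sU) = 0 = \pi_m^{orb}(\sM_{g,2n}^\sigma)$ for $2 \leq m < d(g,n,k)$, this shows that the inclusion induces an isomorphism on $\pi_m^{orb}$ for every $m < d(g,n,k)$. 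Equivalently, the long exact sequence of the pair $(\sT^\sigma,\tilde\sU)$, in which $\pi_m(\sT^\sigma,\tilde\sU) \cong \tilde\pi_{m-1}(\tilde\sU)$ vanishes for $m \leq d(g,n,k)$, exhibits $\tilde\sU \hookrightarrow \sT^\sigma$ as $d(g,n,k)$-connected. When $d(g,n,k) < 2$ the asserted range of isomorphisms is empty, so no further argument is required.

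The principal obstacle is the equivariant identification $\tilde\sU \simeq \sC^\sigma(S)$. Although it follows the pattern of the topology-at-infinity arguments of Harer \cite{Harer_VirtualCohomDim} and of Ivanov for the ordinary mapping class group $\Mod_{g,n}$, the restriction to the fixed locus $\sT^\sigma$ of the anti-holomorphic involution requires genuine care: one must work throughout with $\sigma$-invariant curves and with $\sigma$-orbits of their isotopy classes, check that the thin regions $U_v$ and their intersections remain contractible and $\sigma$-equivariant, and confirm that the resulting nerve is exactly $\sC^\sigma(S)$ rather than the ordinary curve complex or a suspension of it --- the latter point being what pins the connectivity bound at $d(g,n,k)$ with no shift. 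Reconciling the exhaustion-based neighborhood of infinity with the thin part is a second, more routine, matter; once both are in hand, the homotopy-group bookkeeping is formal.
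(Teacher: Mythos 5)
Your proposal is correct and follows essentially the same route as the paper's own proof: reduce the orbifold homotopy groups to ordinary homotopy groups of $\Teich_{g,2n}^\sigma$ and of its thin part via the Borel construction and covering-space theory, identify the thin part up to homotopy with $\sC^\sigma(S) \cong \abc(S/\sigma)$ by a nerve argument in Fenchel--Nielsen coordinates (this is Lemma \ref{lem:PropOfThick}(5)), and conclude from the wedge-of-spheres computation (Theorem \ref{thm:ABCHomotopyType}). The only real deviation is the treatment of $m=1$: you invoke simple connectivity of the thin part, which is available precisely when $d(g,n,k) \geq 2$, whereas the paper pushes loops into the neighborhood of infinity by an explicit Fenchel--Nielsen path construction (Lemma \ref{lem:ConnInf}) requiring only connectivity of $\sC^\sigma(S)$; both arguments are valid.
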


The remainder of this paper is organized as follows.  In section \ref{sect:BackRAC}, we discuss some preliminary material on real algebraic curves and mapping classes groups associated to them.  In section \ref{sect:TeichTheory}, we use Teichm\"{u}ller theory to construct moduli spaces of real algebraic curves.  After defining these moduli spaces, we study ascending sequences of compact subspaces of these moduli spaces and define a subspace at infinity.  In section \ref{sect:CurveComplexes}, we first study curve complexes of (possibly non-orientable) surfaces.  Next, we introduce the $\abc$-complex of a surface and compute its homotopy type.  In section \ref{sect:VCD}, we briefly review the work of Bieri and Eckmann on duality groups and give the proof of Theorem \ref{thm:InvolutionDualityGroup}.  In section \ref{sect:OrbiHomotopyGroups}, we review the definition of orbifold homotopy groups and give the proof of Theorem \ref{thm:OrbiHomotopyGroups}.

\subsection*{Acknowledgements}
I would like to thank my undergraduate research advisor, Richard Hain, for all of his time, expertise, and guidance through out my time at Duke as well as his comments, critiques, and suggestions on numerous drafts of this paper.  I would also like to thank John Harer for several enlightening discussions about curve complexes.

\section{Background on Real Algebraic Curves}\label{sect:BackRAC}

In this section, we discuss some preliminary material on real algebraic curves and mapping class groups associated to them.

\subsection{Real Algebraic Curves}
\begin{defn}\label{defn:RAC}
A \emph{$2n$-pointed (smooth, projective) real algebraic curve of genus $g$} is a compact, connected Riemann surface $X$ of genus $g$ with $2n$ marked points along with an anti-holomorphic involution $\sigma$ of $X$ that acts freely on the collection of marked points.  We denote such a real algebraic curve by the pair $(X,\sigma)$.
\end{defn}

\begin{defn}\label{defn:RACIsom}
Two real algebraic curves $(X,\sigma)$ and $(X',\sigma')$ are isomorphic if there exists a biholomorphism $f: X \to X'$ such that $f \circ \sigma = \sigma' \circ f$ and $f$ is a bijection between the collections of marked points of $X$ and $X'$.
\end{defn}

The sense in which a compact, connected Riemann surface $X$ with an anti-homomorphic involution $\sigma$ is a real algebraic curve is described by the following proposition.

\begin{prop}\label{prop:RACNormIrred}
A compact Riemann surface $X$ with an anti-holomorphic involution $\sigma$ is isomorphic to the compact Riemann surface whose field of meromorphic functions is the fraction field of $\IC[x,y]/(f)$ for an irreducible polynomial $f$ in $\IR[x,y]$.
\end{prop}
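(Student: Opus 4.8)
The plan is to descend through the classical dictionary between compact connected Riemann surfaces and function fields in one variable, carrying the involution $\sigma$ along at the level of fields. Recall the standard fact that the contravariant functor $X \mapsto M(X)$, sending a compact connected Riemann surface to its field of meromorphic functions, is an anti-equivalence onto the category of finitely generated field extensions of $\IC$ of transcendence degree one: two such surfaces are biholomorphic exactly when their function fields are isomorphic over $\IC$, and every such field arises as some $M(X)$. So it suffices to produce an irreducible $f \in \IR[x,y]$ together with a $\IC$-algebra isomorphism $M(X) \cong \operatorname{Frac}(\IC[x,y]/(f))$, and then invoke the anti-equivalence.

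First I would promote $\sigma$ to an involution of the function field. Since $\sigma$ is anti-holomorphic, for $h \in M(X)$ the composite $\sigma^*(h) := \overline{h \circ \sigma}$ is again meromorphic (it is anti-holomorphic post-composed with conjugation, hence holomorphic into $\IP^1$), and a direct check shows $\sigma^*$ is a ring automorphism of $M(X)$ restricting to complex conjugation on the constants $\IC$. From $\sigma^2 = \mathrm{id}$ one gets $(\sigma^*)^2 = \mathrm{id}$, and $\sigma^*(i) = -i$ shows $\sigma^*$ is a \emph{nontrivial} involution. Let $K := M(X)^{\sigma^*}$ be its fixed field. By Artin's theorem $[M(X):K] = 2$, and since $i \notin K$ we have $M(X) = K(i) = K \otimes_\IR \IC$. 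It follows that $K$ is finitely generated of transcendence degree one over $\IR$, i.e.\ a function field in one variable over $\IR$, and moreover that $\IR$ is algebraically closed in $K$: an element of $K$ algebraic over $\IR$ lies in $K \cap \IC$, which cannot equal $\IC$ (else $i \in K$), and hence equals $\IR$.

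Next I would build a plane model over $\IR$ and complexify. Choosing $x \in K$ transcendental over $\IR$, the extension $K/\IR(x)$ is finite and separable, so by the primitive element theorem $K = \IR(x)(y)$; clearing denominators in the minimal polynomial of $y$ over $\IR(x)$ produces $f \in \IR[x,y]$, irreducible over $\IR(x)$ and hence (by Gauss's lemma) irreducible in $\IR[x,y]$, with $K = \operatorname{Frac}(\IR[x,y]/(f))$. Because $\IR$ is algebraically closed in $K$ and we are in characteristic zero, the base change $(\IR[x,y]/(f)) \otimes_\IR \IC = \IC[x,y]/(f)$ remains a domain — equivalently $f$ stays irreducible over $\IC$ — and its fraction field is $K \otimes_\IR \IC = K(i) = M(X)$. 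Applying the anti-equivalence then identifies $X$ with the compact Riemann surface whose function field is $\operatorname{Frac}(\IC[x,y]/(f))$, as claimed.

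The step I expect to be the main obstacle is the geometric-integrality passage from $\IR$ to $\IC$: a polynomial irreducible over $\IR$ may well factor over $\IC$, so the entire content is that the real structure $\sigma$ forces $\IR$ to be algebraically closed in $K$, which is precisely what keeps $f$ irreducible after complexification and ensures that $\operatorname{Frac}(\IC[x,y]/(f))$ recovers all of $M(X)$ rather than a proper subfield. I would also take care to cite the classical anti-equivalence and the transcendence-degree and finite-generation bookkeeping in exactly the form used above.
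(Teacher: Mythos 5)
Your proof is correct and follows essentially the same route as the paper's: both induce the conjugate-linear involution $s(\phi)=\overline{\sigma^*\phi}$ on the function field $F=M(X)$, pass to its fixed field $K$ with $[F:K]=2$ and $K\cap\IC=\IR$, take a primitive generator of $K$ over $\IR(x)$ and clear denominators to get an irreducible $f\in\IR[x,y]$, and recover $F=K\otimes_\IR\IC$ as the fraction field of $\IC[x,y]/(f)$. Your write-up merely makes explicit two points the paper leaves implicit (the anti-equivalence with function fields and the persistence of irreducibility of $f$ over $\IC$, which in the paper is absorbed into the identity $F=K\otimes_\IR\IC$ being a field), so it is the same argument, carefully elaborated.
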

\begin{proof}
Let $F$ be the field of meromorphic functions on $X$. Then $\sigma$ induces the field involution $s : F \to F$ given by $s(\phi) = \overline{\sigma^* \phi}$.  The restriction of $s$ to the constant functions is complex conjugation.  Let $K$ denote the fixed field of $s$. We have that $[F:K] = 2$.  Since $K$ contains the real numbers $\IR$ but not the complex numbers $\IC$, we have that $K \cap \IC$ is $\IR$.  It follows that $K \otimes_\IR \IC$ is $F$.

Since $F$ has transcendence degree $1$ over $\IC$, we have that $K$ has transcendence degree $1$ over $\IR$, that is, $K$ is an algebraic extension of $\IR(x)$. Let $y$ be any primitive generator of $K$ over $\IR(x)$ and $f(x,y)$ its minimal polynomial, which is an irreducible element of $\IR(x)[y]$.  By clearing denominators, we may assume that $f(x,y)$ is in $\IR[x,y]$ and is irreducible. Since $F = K \otimes_\IR \IC$, we have that $F = \IC(x)[y]/(f)$, which implies that $X$ is the normalization of the plane curve in $\IP^2(\IC)$ defined by $f(x,y) = 0$.
\end{proof}

\begin{defn}\label{defn:RACTopType}
Two real algebraic curves $(X,\sigma)$ and $(X',\sigma')$ are \emph{topologically equivalent} (or of the same \emph{topological type}) if there exists a homeomorphism $f: X \to X'$ such that $f \circ \sigma = \sigma' \circ f$ and $f$ is a bijection between the collections of marked points of $X$ and $X'$.
\end{defn}

We consider a real algebraic curve $(X,\sigma)$ of genus $g$ (ignoring marked points for the moment) and set
\[\Fix(\sigma) = \{ x \in X \mid \sigma(x) = x\}.\]
It was originally shown by Harnack \cite{Harnack_HarnackInequality} that $\Fix(\sigma)$ is a smooth, 1-dimensional, real submanifold of $X$ and is the disjoint union of a collection of $k$ circles where $0 \leq k \leq g+1$.  We may consider the surface obtained by taking the complement of the fixed point set of $\sigma$ in $X$, that is, $X - \Fix(\sigma)$.  Define the invariant $a \in \{0,1\}$ associated to $(X,\sigma)$ by
\[ a = \begin{cases} 0 & \mbox{if }X-\Fix(\sigma) \mbox{ is not connected,}\\1 & \mbox{if }X-\Fix(\sigma) \mbox{ is connected.}\\ \end{cases}\]
Let $k$ denote the number of connected components of $\Fix(\sigma)$.  The topological type of a real algebraic curve is completely determined by the tuple $(g,k,a)$.

\begin{thm}[Weichold \cite{Weichold_InvolutionTopType}]\label{thm:InvolutionTopType}
Let $(X,\sigma)$ and $(X',\sigma')$ be two real algebraic curves.  The following conditions are equivalent:
\begin{enumerate}
\item There exists a homeomorphism $f: X \to X'$ such that $f \circ \sigma = \sigma' \circ f$.
\item $X/\sigma$ is homeomorphic to $X'/\sigma'$.
\item $(g,k,a) = (g',k',a')$.
\end{enumerate}
The tuple $(g,k,a)$ is subject to the following conditions:
\begin{enumerate}
\item if $a = 0$, then $1 \leq k \leq g+1$ and $k \equiv (g+1) \,\,(\mbox{\emph{mod }} 2)$,
\item if $a = 1$, then $0 \leq k \leq g$.
\end{enumerate}
Moreover, for each tuple $(g,k,a)$ that satisfies the conditions above, there exists a real algebraic curve of topological type $(g,k,a)$.
\end{thm}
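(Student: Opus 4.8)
The plan is to realize all three conditions as facets of a single object, the quotient surface $Y = X/\sigma$. Since $\sigma$ is an orientation-reversing involution whose fixed locus $\Fix(\sigma)$ consists of $k$ disjoint circles along which $\sigma$ acts as a reflection, $Y$ is a compact connected surface whose boundary $\partial Y$ is the image of $\Fix(\sigma)$, hence has exactly $k$ components. I would first record the Euler characteristic relation: choosing a CW structure on $Y$ with $\partial Y$ a subcomplex and lifting it, the quotient map $X \to Y$ doubles every interior cell and fixes every boundary cell, so $\chi(X) = 2\chi(Y) - \chi(\partial Y) = 2\chi(Y)$, giving $\chi(Y) = 1-g$. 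I would also note that $Y$ is orientable precisely when $a = 0$: if $X - \Fix(\sigma)$ is disconnected it has exactly two components swapped by $\sigma$ (they cannot be individually preserved, else $Y$ would be disconnected), so $Y^\circ$ is homeomorphic to one of them and is orientable; if $X - \Fix(\sigma)$ is connected, $\sigma$ is a free orientation-reversing involution of it and the quotient $Y^\circ$ is non-orientable.

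With this dictionary in place, the implications (1) $\Rightarrow$ (2) $\Rightarrow$ (3) are quick. An equivariant homeomorphism descends to the quotients, giving (1) $\Rightarrow$ (2). For (2) $\Rightarrow$ (3), the homeomorphism type of the compact surface-with-boundary $Y$ determines its classical invariants, and these are exactly our data: $k$ is the number of boundary components, $a$ detects orientability, and $g = 1 - \chi(Y)$ is read off from the Euler characteristic. The constraints on $(g,k,a)$ drop out of the same computation. If $a = 0$ then $Y$ is orientable of some genus $h \geq 0$ with $\chi(Y) = 2 - 2h - k$, so $k = g + 1 - 2h$; this forces $k \leq g+1$ and $k \equiv g+1 \pmod 2$, while $k \geq 1$ since $a=0$ forces $\Fix(\sigma)\neq\emptyset$ (otherwise $X-\Fix(\sigma)=X$ is connected). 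If $a = 1$ then $Y$ is non-orientable with $\rho \geq 1$ cross-caps and $\chi(Y) = 2 - \rho - k$, so $\rho = g+1-k$, forcing $0 \le k \le g$.

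The substance of the theorem is the reverse implication (3) $\Rightarrow$ (1), for which I would construct an explicit inverse to $(X,\sigma) \mapsto X/\sigma$. Given a compact surface $Y$ with boundary, let $\pi : \hat Y \to Y$ be its orientation double cover, with free orientation-reversing deck involution $\tau$, and define the double
\[ DY = \hat Y \big/ \big( x \sim \tau(x) \ \text{for } x \in \partial \hat Y \big), \]
with $\sigma_{DY}$ the homeomorphism induced by $\tau$. Gluing the boundary of the orientable surface $\hat Y$ to itself by the orientation-reversing map $\tau|_{\partial \hat Y}$ produces a closed orientable surface, $\sigma_{DY}$ is orientation-reversing, and $\Fix(\sigma_{DY}) = \partial \hat Y / \tau$ is a union of circles. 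When $Y$ is orientable this recovers the Schottky double (two copies of $Y$ glued along $\partial Y$, with $\sigma$ the swap); I would check directly, for instance on the M\"obius band yielding the torus, that it behaves as expected in the non-orientable case. The key virtue is that $DY$ is natural in $Y$: the orientation double cover and its deck transformation are canonical, so a homeomorphism $Y \to Y'$ lifts to a $\tau$-equivariant homeomorphism $\hat Y \to \hat Y'$ and descends to an equivariant homeomorphism $(DY, \sigma_{DY}) \to (DY', \sigma_{DY'})$. Since $(g,k,a) = (g',k',a')$ implies $Y \cong Y'$ by the classification of compact surfaces with boundary, this yields the desired equivariant homeomorphism, proving (3) $\Rightarrow$ (1) and simultaneously the realization statement: any admissible tuple is realized by doubling a surface $Y$ with the prescribed orientability, boundary count, and Euler characteristic, and a $\sigma$-invariant conformal structure (obtained by averaging a Riemannian metric over $\sigma$) upgrades $(X,\sigma)$ to an actual real algebraic curve.

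The main obstacle I anticipate is verifying that the double construction is a genuine two-sided inverse to passing to the quotient, particularly in the non-orientable case: one must check that $X - \Fix(\sigma)$ is identified with the orientation double cover of $Y^\circ$, so that reattaching the fixed circles via the boundary gluing reconstructs $X$ up to equivariant homeomorphism, and that the boundary identification indeed closes $\hat Y$ into an orientable surface carrying $\tau$ as an orientation-reversing involution. Once this naturality and invertibility are nailed down, the equivalences and the realization all follow formally.
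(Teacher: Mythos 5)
The paper does not prove this theorem; it is quoted as a classical result of Weichold, so there is no internal proof to compare against, and your argument has to stand on its own. It does: what you describe is the standard proof (quotient surface, classification of compact surfaces with boundary, and the doubling construction via the orientation double cover as inverse), and the outline is correct, including the derivation of the constraints on $(g,k,a)$ from $\chi(Y) = 1-g$. Beyond the reconstruction step you already flag, two points deserve explicit verification. First, orientability of $DY$ requires that $\tau$ preserve no boundary circle of $\hat Y$, i.e.\ that the orientation double cover be trivial over each component of $\partial Y$; this holds because a boundary circle has an annular collar, hence is an orientation-preserving loop, so its preimage is two circles swapped by $\tau$. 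This is not a vacuous worry: if some boundary circle were $\tau$-invariant with $\tau$ acting freely on it, the identification $x \sim \tau(x)$ there would insert a cross-cap and destroy orientability, so the case must be excluded, not ignored. Second, when $Y$ is orientable, $\hat Y$ is two disjoint copies of $Y$ and $DY$ is connected only because $k \geq 1$; this is exactly the constraint $a=0 \Rightarrow k \geq 1$ you derived, but it should be invoked so that $DY$ is a legitimate (connected) curve. As for the two-sided-inverse step you call the main obstacle, the clean route is to cut $X$ along $\Fix(\sigma)$ rather than work with the open complement: the cut surface $X_{\mathrm{cut}}$ is compact with boundary, carries a free lift $\tilde\sigma$ of $\sigma$, and the quotient map $X_{\mathrm{cut}} \to Y$ is an unramified double cover with orientable total space; when $Y$ is non-orientable any such connected cover corresponds to an index-two subgroup contained in $\ker w_1$, forcing it to equal the orientation cover, while for $Y$ orientable both covers are trivial, and in either case the isomorphism of covers automatically intertwines the deck involutions. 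The local reflection model $(\theta,t) \mapsto (\theta,-t)$ near a fixed circle then shows that regluing $x \sim \tilde\sigma(x)$ along $\partial X_{\mathrm{cut}}$ recovers $X$ equivariantly, completing your argument; the realization statement follows as you say, by averaging a metric to make $\sigma$ an isometry and hence anti-holomorphic for the induced complex structure.
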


Theorem \ref{thm:InvolutionTopType} implies that there are $\lfloor \frac{3g+4}{2} \rfloor$ topological types of $2n$-pointed real algebraic curves of genus $g$ for each $n$.  Given a real algebraic curve $(X,\sigma)$ of topological type $(g,g+1,0)$, the quotient $X/\sigma$ is homeomorphic to a sphere with $g+1$ boundary components.  For the topological type $(g,0,1)$, we have that $X/ \sigma$ is homeomorphic to a non-orientable surface with $g+1$ cross caps.\footnote{A non-orientable surface with $c$ cross caps is, by definition, the connected sum of $c$ copies of $\IR\IP^2$.}  In general, $X \to X/\sigma$ is a ramified double covering of the not necessarily orientable surface $X/\sigma$, which is ramified over the boundary components of $X/\sigma$.  Hence, to each topological type of real algebraic curve, we may associate the ramified covering $X \to X/\sigma$ of real surfaces.  In light of this, we may think of a real algebraic curve as a ramified covering of real surfaces endowed with a conformal structure.

\subsection{Mapping Class Groups}
By forgetting the real structure of a real algebraic curve, we obtain a surface with an orientation reversing involution.  Recall that the mapping class group of an orientable surface S of genus $g$ with $n$ marked points $p_1,\dots,p_n$, denoted $\Mod_{g,n}$, is the group of isotopy classes of orientation preserving diffeomorphisms of $S$ and leave $\{p_1,\dots,p_n\}$ invariant.

\begin{defn}\label{defn:RACMCG}
The \emph{mapping class group} of a surface $S$ of genus $g$ with $2n$ marked points $p_1,\dots,p_{2n}$ and an orientation reversing involution $\sigma$, denoted $\Mod_{g,2n}^\sigma$, is the group of isotopy classes of orientation preserving diffeomorphisms of $S$ that commute with $\sigma$ up to isotopy and leave $\{p_1,\dots,p_{2n}\}$ invariant.
\end{defn}

One may relate $\Mod_{g,2n}^\sigma$ to a mapping class group of the quotient $S/\sigma$.  To do this, we first recall a fact from Riemannian geometry.  Given a collection of isometries $\mathscr{I}$ of a Riemannian manifold $M$, the fixed locus of $\mathscr{I}$,
\[ \Fix(\mathscr{I}) = \{ p \in M \mid g(p) = p \,\,\,\, \forall \, g \in \mathscr{I}\}, \]
is a totally geodesic submanifold (possibly not connected, but with finitely many path components) of $M$.  Furthermore, if $M$ is compact, then each path component of $\Fix(\mathscr{I})$ is compact (see \cite[II.5]{Kobayashi_TransformationGroups}).  

We consider the case where $M$ is $S$ and $\mathscr{I}$ is $\sigma$.  Let $ds^2$ be a Riemannian metric on $S$.  Averaging over $\sigma$, we have that 
\[ \frac{1}{2} \left( ds^2+ \sigma^* ds^2 \right)\]
is a Riemannian metric on $S$ where $\sigma$ acts by isometry.  Since $\sigma$ is an orientation reversing involution, it locally acts by reflection.  We have that $\Fix(\sigma)$ is a disjoint collection of embedded circles and consequently $S/\sigma$ is a surface with boundary.  Let $\pi: S \to S/\sigma$ denote the quotient map.  Each diffeomorphism of $S/\sigma$ has a unique orientation preserving lift to $S$ which commutes with $\sigma$.  Hence, we may identify $\Mod_{g,2n}^\sigma$ with the group of isotopy classes of diffeomorphism of $S/\sigma$ that leave the marked points invariant, preserve a local orientation of $S/\sigma$ at each $\pi(p_i)$, can permute the boundary components of $S/\sigma$, and are not required to fix the boundary components pointwise.  Moreover, isotopies of diffeomorphisms are also not required to fix the boundary components pointwise.

\section{Teichm\"{u}ller Theory of Real Algebraic Curves}\label{sect:TeichTheory}

In this section, we first construct moduli spaces of real algebraic curves.  Next, using an ascending sequence of compact subspaces, we define a notion of infinity for our moduli spaces.  In this section, we assume that $S$ is a compact, oriented surface of genus $g$ with $n$ marked points and with empty boundary.  We let $P = \{p_1,\dots,p_n\}$ denote the finite collection of marked points in $S$ and set $S' = S-P$.  We denote this pair by $(S,P)$ and refer to $(S,P)$ as a \emph{$n$-pointed surface}.  We also assume that $2-2g-n = \chi(S')<0$.  

\subsection{Construction of Moduli Spaces}\label{subsect:TeichTheory_Construction}
While constructing the moduli spaces of real algebraic curves, we will make use of the equivalence between complex structures and oriented hyperbolic structures that follows from the Uniformization Theorem.

\begin{defn}\label{defn:HyperbolicStructure}
A \emph{hyperbolic structure} on a $n$-pointed surface $(S,P)$ is a $n$-pointed surface $(X,Q)$, where $X-Q$ is a surface with a complete hyperbolic metric, and diffeomorphism $f: (S,P) \to (X,Q)$ such that $f$ maps $P$ bijectively to $Q$.  The diffeomorphism $f$ is called a \emph{marking} of $(X,Q)$.
\end{defn}

\begin{defn}\label{defn:IsotopicHyperStr}
Two hyperbolic structures on a pointed surface $(S,P)$, $f:(S,P) \to (X,Q)$ and $g: (S,P) \to (Y,R)$, are \emph{isotopic} if there exists an isometry $I: X' \to Y'$ such that the following diagram commutes up to isotopy.
\[\xymatrix{S' \ar[r]^{f} \ar[dr]_{g} & X' \ar[d]^{I}\\&Y'}\]
Denote this equivalence relation by $\sim$.
\end{defn}

\begin{defn}\label{defn:TeichSpace}
The \emph{Teichm\"{u}ller space} of a $n$-pointed surface $(S,P)$ of genus $g$, denoted $\Teich_{g,n}$, is the set of isotopy classes of hyperbolic structures on $(S,P)$:
\[ \Teich_{g,n} := \{ \mbox{hyperbolic structures on }(S,P)\} / \sim.\]
\end{defn}

Denote an equivalence class of hyperbolic structures on $(S,P)$ by $\mathscr{X} = [f : (S,P) \to (X,Q)]$.  Recall that $\Teich_{g,n}$ may be endowed with the structure of a smooth manifold and is diffeomorphic to $\IR^{6g-6+2n}$.  The mapping class group of a surface $S$ with a collection of marked points $P$ acts on $\Teich_{g,n}$.  Given $\phi \in \Mod_{g,n}$ and $[f: (S,P) \to (X,Q)] \in \Teich_{g,n}$, we obtain a new hyperbolic structure on $(S,P)$ by considering $[f \circ \phi^{-1}: (S,P) \to (X,Q)]$.  This $\Mod_{g,n}$-action is properly discontinuous and virtually free.\footnote{It is well-known that the kernel of the homomorphism $\Mod_{g,n} \to \Aut(H_1(S;\IZ/3\IZ))$ obtained from the obvious action of $\Mod_{g,n}$ on $H_1(S;\IZ/3\IZ)$ is a finite index, torsion free subgroup that acts freely on $\Teich_{g,n}$.}  We define the \emph{moduli space of $n$-pointed Riemann surfaces of genus $g$} as 
\[\sM_{g,n} :=\Mod_{g,n} \backslash \Teich_{g,n}.\]  

To construct the moduli spaces of real algebraic curves, we proceed in a similar manner.  We will now let $\{p_1,\dots,p_{2n}\}$ be the collection of marked points $P$ in $S$. 

Notice that each hyperbolic structure $[f:(S,P) \to ((X,Q),\tau)]$ where $((X,Q),\tau)$ is a pointed real algebraic curve with anti-holomorphic involution $\tau$ determines an isotopy class of orientation reversing involutions $\sigma: S \to S$ that leaves the marked points invariant, namely, $\sigma := f^{-1} \circ \tau \circ f$.  For each orientation reversing involution $\sigma$ of $S$ there is a corresponding involution of $\Teich_{g,2n}$, denoted $\Psi_\sigma : \Teich_{g,2n} \to \Teich_{g,2n}$, which is given by
\[[f: (S,P) \to (X,Q)] \mapsto [f \circ \sigma: (S,P) \to (\overline{X},\overline{Q})]\]
where $\overline{\cdot}$ denotes complex conjugation on $X$.  A hyperbolic structure $[f:(S,P) \to (X,Q)]$ is fixed by $\Psi_\sigma$ if and only if $X$ has a real algebraic structure $\tau: X \to X$ such that $\tau \simeq f \circ \sigma \circ f^{-1}$.  Denote the fixed locus of $\Psi_\sigma$ in $\Teich_{g,2n}$ by $\Teich_{g,2n}^\sigma$.  Fixing an appropriate $\sigma$-invariant pants decomposition, an argument using Fenchel--Nielsen coordinates shows that $\Teich_{g,2n}^\sigma$ is homeomorphic to $3g-3+2n$.  Given $\phi \in \Mod_{g,2n}$, $\Teich_{g,2n}^\sigma$ is invariant under the action of $\phi$ if and only if $\phi$ commutes with $\sigma$, that is, $\phi \in \Mod_{g,2n}^\sigma$.  Since $\Mod_{g,2n}$ acts properly discontinuously and virtually freely on $\Teich_{g,2n}$, we have that $\Mod_{g,2n}^\sigma$ acts properly discontinuously and virtually freely on $\Teich_{g,2n}^\sigma$.  We consider the orbit space
\[\sM_{g,2n}^\sigma := \Mod_{g,2n}^\sigma \backslash \Teich_{g,2n}^\sigma.\]
If two orientation reversing involutions of $S$ are conjugate, then there exists a natural identification of their moduli spaces.  With this in mind, we make the following definition.

\begin{defn}\label{defn:ModuliRAC}
The \emph{moduli space of $2n$-pointed real algebraic curves of genus $g$}, denoted $\sM_{g,2n}^{\IR}$, is the space of isomorphism classes of $2n$-pointed real algebraic curves of genus $g$ and is given by
\[ \sM_{g,2n}^\IR = \coprod_{\sigma \in \Sigma} \sM_{g,2n}^\sigma\]
where $\Sigma$ denotes the set of conjugacy classes of orientation reversing involutions of $S$ that leave the marked points of $S$ invariant.
\end{defn}

\subsection{Compact Subspaces}\label{subsect:TeichTheory_Compact}
In later sections, it will be useful for us to consider a compact subspace of $\sM_{g,2n}^\sigma$.  In order to obtain such a subspace, we will construct a subspace of $\Teich_{g,2n}^\sigma$ whose image in $\sM_{g,2n}^\sigma$ is compact.  

Given a hyperbolic structure $\mathscr{X} = [f: (S,P) \to (X,Q)]$ in $\Teich_{g,n}$ and an isotopy class of simple closed curves $a$ in $S$, let $\ell_{\mathscr{X}}(a)$ denote the length of the unique geodesic in the isotopy class $f(a)$ with respect to the hyperbolic metric on $X$.  Recall the following fact \cite[II.3.3]{Abikoff_RealAnalyticTeichTheory}.

\begin{prop}\label{prop:ShortGeodesics}
Let $a$ and $b$ be isotopy classes of simple closed curves in a surface $S$ with a collection of marked points $P$ and let $\mathscr{X}$ be a hyperbolic structure on $(S,P)$.  There exists $\delta >0$ such that if $\ell_\mathscr{X}(a),\ell_\mathscr{X}(b)\leq \delta$, then the geodesic representatives in the isotopy classes of $a$ and $b$ are either disjoint or coincide.
\end{prop}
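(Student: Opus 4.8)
\emph{The plan is to reduce the statement to the Collar Lemma for complete hyperbolic surfaces.} Fix the complete hyperbolic metric on $X' = X - Q$ and let $\alpha$ and $\beta$ denote the geodesic representatives in the isotopy classes $f(a)$ and $f(b)$, so that $\ell_{\mathscr{X}}(a)$ and $\ell_{\mathscr{X}}(b)$ are their lengths. First I would recall the standard fact that the geodesic representatives of two isotopy classes of essential simple closed curves realize their geometric intersection number; in particular, two such geodesics either coincide (precisely when the isotopy classes agree), are disjoint, or meet transversally in finitely many points. It therefore suffices to rule out transverse intersection when both lengths are small.

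The main input is the Collar Lemma: a simple closed geodesic $\gamma$ of length $\ell$ on a complete hyperbolic surface admits an embedded collar neighborhood $\{x : d(x,\gamma) \le w(\ell)\}$ of half-width
\[ w(\ell) = \operatorname{arcsinh}\!\left(\frac{1}{\sinh(\ell/2)}\right), \]
isometric to a standard hyperbolic cylinder, and $w$ is a decreasing function of $\ell$ with $w(\ell) \to \infty$ as $\ell \to 0$. Granting this, I would argue that if a simple closed geodesic $\beta \neq \alpha$ meets $\alpha$ transversally, then the arc of $\beta$ containing a crossing point must enter the collar of $\alpha$ through one boundary component, cross the core $\alpha$, and exit through the opposite boundary component. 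Since the distance across the collar from one boundary component to the other is $2w(\ell_{\mathscr{X}}(a))$, this yields the length bound $\ell_{\mathscr{X}}(b) \ge 2\, w(\ell_{\mathscr{X}}(a))$.

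To finish, I would choose $\delta > 0$ small enough that $2\,w(\delta) > \delta$, which is possible because $w(\delta) \to \infty$ as $\delta \to 0$. If $\ell_{\mathscr{X}}(a), \ell_{\mathscr{X}}(b) \le \delta$ while $\alpha \neq \beta$ intersect transversally, then using the collar bound together with the monotonicity of $w$ we would obtain
\[ \delta \ge \ell_{\mathscr{X}}(b) \ge 2\, w(\ell_{\mathscr{X}}(a)) \ge 2\, w(\delta) > \delta, \]
a contradiction. Hence $\alpha$ and $\beta$ cannot meet transversally, so they are disjoint or coincide, as claimed.

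\emph{The main obstacle is the collar bound itself}---verifying that an intersecting geodesic genuinely traverses the full width of the collar and so acquires length at least $2w$---but this is exactly the content of the Collar Lemma and may be cited rather than re-derived. A minor point requiring care is the presence of cusps arising from the marked points $P$: one should note that the relevant isotopy classes are essential and non-peripheral, so that geodesic representatives exist, and that the Collar Lemma applies without change to complete finite-area hyperbolic surfaces with cusps.
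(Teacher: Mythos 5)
Your proof is correct: the paper in fact offers no argument for this proposition at all, citing it as a known fact from Abikoff \cite[II.3.3]{Abikoff_RealAnalyticTeichTheory}, and your Collar Lemma argument is precisely the standard proof of that cited result (it even yields a universal constant, e.g.\ any $\delta$ with $\sinh^2(\delta/2)<1$, independent of the hyperbolic structure). The two points you flag --- that distinct simple closed geodesics realize geometric intersection number, and that the Collar Lemma persists on complete finite-area surfaces with cusps --- are exactly the right hypotheses to check, so nothing is missing.
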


Recall that the \emph{curve complex} of a surface $S$, denoted $\sC(S)$, is the simplicial complex whose $k$-simplices are $(k+1)$-tuples of isotopy classes of essential simple closed curves in $S$ that may be realized disjointly.  Interpreting Proposition \ref{prop:ShortGeodesics} in terms of simplices in $\sC(S)$, we have the following corollary.

\begin{cor}\label{cor:ShortSimpsCC}
Let $a_0,\dots,a_k$ be isotopy classes of essential simple closed curves in a surface $S$.  If $\ell_\mathscr{X}(a_0),\dots,\ell_\mathscr{X}(a_k) \leq \delta$, then  $a_0,\dots,a_k$ span a $(k+1)$-simplex in $\sC(S)$.
\end{cor}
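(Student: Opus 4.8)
The plan is to reduce the statement about $(k+1)$-tuples to the pairwise dichotomy of Proposition \ref{prop:ShortGeodesics}. First I would fix, for each pair of indices $i<j$, the constant $\delta_{ij}>0$ provided by Proposition \ref{prop:ShortGeodesics} applied to the pair $a_i,a_j$ together with the hyperbolic structure $\mathscr{X}$, and then set $\delta=\min_{i<j}\delta_{ij}$. Because we are dealing with a finite $(k+1)$-tuple, this minimum is a positive real number, and it has the property that whenever $\ell_\mathscr{X}(a_i),\ell_\mathscr{X}(a_j)\leq\delta$ the hypothesis of the proposition is satisfied for every pair simultaneously.

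With this $\delta$ in hand, suppose $\ell_\mathscr{X}(a_0),\dots,\ell_\mathscr{X}(a_k)\leq\delta$. Applying Proposition \ref{prop:ShortGeodesics} to each pair $a_i,a_j$, I conclude that the geodesic representatives of $a_i$ and $a_j$ in $X$ are either disjoint or coincide. The point that promotes this pairwise conclusion to a single disjoint realization is that on a complete hyperbolic surface each free homotopy class of essential simple closed curve contains a unique geodesic representative, and distinct isotopy classes have distinct geodesics. Hence, as the $a_i$ are distinct vertices of $\sC(S)$, no two of their geodesic representatives can coincide, so by the dichotomy they must be pairwise disjoint.

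Finally, I would observe that pairwise disjointness of the canonical geodesic representatives is exactly a simultaneous disjoint realization: the union of these $k+1$ embedded geodesics realizes the classes $f(a_0),\dots,f(a_k)$ disjointly on $X$, and transporting back along the marking $f$ yields disjoint representatives of $a_0,\dots,a_k$ on $S$. By the definition of the curve complex, this means $a_0,\dots,a_k$ span a simplex of $\sC(S)$.

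The only genuine subtlety, and hence the main (though mild) obstacle, is the passage from the pairwise statement to the global one: one must rule out the ``coincide'' alternative and be certain that pairwise disjointness of the geodesics genuinely produces a simultaneously disjoint configuration rather than merely classes that are pairwise isotopic to disjoint ones. Both concerns are resolved by the uniqueness of the geodesic representative in each isotopy class, which simultaneously forbids distinct classes from sharing a geodesic and guarantees that the pairwise-disjoint geodesics form an honest disjoint family on $X$.
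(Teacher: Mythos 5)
Your proof is correct and takes essentially the same route as the paper, which offers no separate argument but presents the corollary as an immediate interpretation of Proposition \ref{prop:ShortGeodesics}; you merely fill in the routine details (taking the minimum of the finitely many pairwise constants $\delta_{ij}$, ruling out the ``coincide'' alternative via uniqueness of geodesic representatives, and using the geodesics themselves as the simultaneous disjoint realization). As a side note, your phrasing quietly corrects a harmless off-by-one in the statement: $k+1$ pairwise disjoint classes span a $k$-simplex of $\sC(S)$, not a $(k+1)$-simplex.
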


Fix $\varepsilon<\delta$, where $\delta$ is given in Proposition \ref{prop:ShortGeodesics}.  Define the \emph{truncated Teichm\"{u}ller space}, denoted $\Teich_{g,n}(\varepsilon)$, as
\[ \Teich_{g,n}(\varepsilon) := \{ \mathscr{X} \in \Teich_{g,n} \mid \ell_\mathscr{X}(a) \geq \varepsilon \hspace{4pt} \forall a \in \sC(S)\}.\]

It is clear that $\Teich_{g,n}(\varepsilon)$ is invariant under the action of $\Mod_{g,n}$.  Define the \emph{truncated moduli space}, $\sM_{g,n}(\varepsilon)$, by
\[\sM_{g,n}(\varepsilon) := \Mod_{g,n} \backslash \Teich_{g,n}(\varepsilon).\]
This orbit space is compact. Recall that the \emph{extended mapping class group} of a $n$-pointed surface $S$ of genus $g$, denoted $\widehat{\Mod}_{g,n}$, is the group of isotopy class of (not necessarily orientation preserving) diffeomorphisms of $S$ that leave the marked points invariant.  The topology of the truncated Teichm\"{u}ller space is described by the following result due to Ji and Wolpert.

\begin{prop}[Ji--Wolpert \cite{JiWolpert_ConfiniteSpace}]\label{prop:DefRetractTeich}
Let $\varepsilon>0$ be given.  If $\varepsilon<\delta$, then there exists a $\widehat{\Mod}_{g,n}$-equivarient deformation retraction from $\Teich_{g,n}$ onto $\Teich_{g,n}(\varepsilon)$.
\end{prop}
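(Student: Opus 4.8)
The plan is to realize the retraction as a stopping-time flow on $\Teich_{g,n}$ that simultaneously lengthens every short geodesic until all essential simple closed curves have length at least $\varepsilon$, while fixing the structures already in $\Teich_{g,n}(\varepsilon)$. The hypothesis $\varepsilon<\delta$ is exactly what makes such a flow possible. By Corollary \ref{cor:ShortSimpsCC}, at any structure $\mathscr{X}$ the set of short curves $A(\mathscr{X})=\{a:\ell_\mathscr{X}(a)<\varepsilon\}$ consists of pairwise disjoint isotopy classes, so it spans a simplex of $\sC(S)$ and extends to a pants decomposition of $S$. Moreover, since the length spectrum is proper and lengths vary continuously over $\Teich_{g,n}$, the assignment $\mathscr{X}\mapsto A(\mathscr{X})$ is locally finite: each point has a neighborhood on which only finitely many isotopy classes ever drop below $\varepsilon$. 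Disjointness and local finiteness of the short curves are the structural input for everything below.

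First I would assemble a locally finite open cover of $\Teich_{g,n}$ indexed by the simplices of $\sC(S)$. With a buffer $\varepsilon<\varepsilon''$, for each simplex $\sigma$ (including the empty one) one takes an open enlargement of
\[ U_\sigma=\{\mathscr{X}:\ell_\mathscr{X}(a)<\varepsilon''\text{ for all }a\in\sigma,\ \text{and }\ell_\mathscr{X}(b)>\varepsilon\text{ for all }b\notin\sigma\}. \]
Corollary \ref{cor:ShortSimpsCC} guarantees that these sets cover $\Teich_{g,n}$ and that $A(\mathscr{X})\subseteq\sigma$ whenever $\mathscr{X}\in U_\sigma$. On $U_\sigma$, after completing $\sigma$ to a pants decomposition $P_\sigma$, I would use the Fenchel--Nielsen length-increasing field $V_\sigma=\sum_{a\in\sigma}\rho(\ell_\mathscr{X}(a))\,\partial/\partial\ell_a$, where $\partial/\partial\ell_a$ increases the length of the pants curve $a$ while fixing all remaining lengths and twists of $P_\sigma$, and $\rho\colon\IR\to[0,1]$ is smooth with $\rho\equiv 1$ on $(-\infty,\varepsilon]$ and $\rho\equiv 0$ on $[\varepsilon'',\infty)$. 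The virtue of these coordinates is that $d\ell_a(\partial/\partial\ell_{a'})=\delta_{aa'}$ among the pants curves of $P_\sigma$, so $V_\sigma$ increases the length of each short curve at unit rate and touches no other pants length.

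Next I would glue these local fields by a partition of unity $\{\psi_\sigma\}$ subordinate to $\{U_\sigma\}$ and set $V=\sum_\sigma\psi_\sigma V_\sigma$. Positivity is a convex condition: wherever a curve $a$ is short, every $\sigma$ with $\psi_\sigma(\mathscr{X})\neq 0$ has $a\in A(\mathscr{X})\subseteq\sigma$, so $d\ell_a(V_\sigma)=\rho(\ell_\mathscr{X}(a))=1$ and hence $d\ell_a(V)=1$. The complementary point, that the flow actually terminates, is where the collar lemma underlying Proposition \ref{prop:ShortGeodesics} re-enters: when $\ell_\mathscr{X}(a)<\varepsilon<\delta$ the curve $a$ has a wide embedded collar, so every curve crossing $a$ is far longer than $\varepsilon$, while any curve disjoint from the short ones is unaffected by lengthening them; consequently no new curve drops below $\varepsilon$ along the flow, the short set $A$ only shrinks, and each structure reaches $\Teich_{g,n}(\varepsilon)$ after a finite, continuously varying time $T(\mathscr{X})=\inf\{t:\Phi_t(\mathscr{X})\in\Teich_{g,n}(\varepsilon)\}$, which vanishes on $\Teich_{g,n}(\varepsilon)$ itself. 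Flowing for time $s\,T(\mathscr{X})$ with $s\in[0,1]$ then yields a strong deformation retraction onto $\Teich_{g,n}(\varepsilon)$.

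I expect the genuine difficulty to lie in reconciling strict lengthening, continuity across the strata where $A(\mathscr{X})$ jumps, and $\widehat{\Mod}_{g,n}$-equivariance all at once. Continuity across strata is handled by the cutoff $\rho$, which switches each curve's contribution off smoothly. The subtle points are the termination estimate and equivariance, and they interact. The Fenchel--Nielsen fields $V_\sigma$ depend on a non-canonical completion $P_\sigma$, so on an overlap $U_\sigma\cap U_\tau$ a curve may be a pants curve for $P_\sigma$ but transverse for $P_\tau$; one must verify that the convex combination $V$ still keeps every non-short curve above $\varepsilon$. Meanwhile the only manifestly natural lengthening directions, the Weil--Petersson gradients of length functions, reintroduce cross-terms $\langle\nabla\ell_a,\nabla\ell_{a'}\rangle$ between disjoint curves whose sign must be controlled. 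I would resolve this by choosing the cover, the completions $P_\sigma$, and the partition of unity equivariantly on a set of $\widehat{\Mod}_{g,n}$-orbit representatives and transporting them by the group action, invoking Wolpert's formulas for the Weil--Petersson pairing and the convexity of geodesic length functions to make the monotonicity estimate rigorous. Making this simultaneous construction precise, rather than merely the local existence of a lengthening flow, is the heart of the argument.
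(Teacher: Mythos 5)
First, a point of comparison: the paper does not prove this proposition at all --- it is imported verbatim from Ji--Wolpert \cite{JiWolpert_ConfiniteSpace} as a known result. So your proposal has to be judged as a reconstruction of that external proof, and as such it contains a genuine gap.

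The decisive flaw is the monotonicity claim: ``any curve disjoint from the short ones is unaffected by lengthening them,'' hence ``no new curve drops below $\varepsilon$ along the flow, the short set $A$ only shrinks.'' This is false for Fenchel--Nielsen coordinate fields. The length of a curve $b$ not belonging to the pants decomposition $P_\sigma$ depends on \emph{all} of the coordinates; in particular, increasing $\ell_a$ while freezing the remaining lengths and twists of $P_\sigma$ can strictly decrease $\ell_b$ even when $i(a,b)=0$. Concretely, in a hyperbolic pair of pants with cuff lengths $L_1,L_2,L_3$, the perpendicular seam joining cuffs $1$ and $2$ has length $s_{12}$ determined by
\[ \cosh(s_{12}) \;=\; \frac{\cosh(L_3/2)+\cosh(L_1/2)\cosh(L_2/2)}{\sinh(L_1/2)\,\sinh(L_2/2)}, \]
which is strictly decreasing in $L_1$. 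So a curve $b$ disjoint from $a$ that runs through the pairs of pants adjacent to $a$, entering and leaving through the other cuffs, gets \emph{shorter} as $\ell_a$ is pushed up. If $\ell_{\mathscr{X}}(b)$ is only slightly above $\varepsilon$, your flow can push it below $\varepsilon$: the short set can grow, the stopping time $T(\mathscr{X})$ is not shown to be finite (you must rule out the flow chasing its own tail, with lengthening $a$ shortening $b$ and vice versa), and the continuity of $T$ is unsupported. The collar lemma behind Proposition \ref{prop:ShortGeodesics} only protects curves that \emph{cross} a short curve; it says nothing about disjoint curves, which are exactly the dangerous ones.

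The equivariance scheme has a related defect, and both are cured by the device you mention but set aside. The stabilizer in $\widehat{\Mod}_{g,n}$ of a simplex $\sigma$ preserves $U_\sigma$ but contains the full mapping class groups of the complementary subsurfaces, so no completion $P_\sigma$ of $\sigma$ to a pants decomposition is stabilizer-invariant; ``choosing data on orbit representatives and transporting by the group'' therefore does not yield well-defined equivariant fields $V_\sigma$. The standard repair --- and, in essence, the actual Ji--Wolpert argument --- is to discard the non-canonical Fenchel--Nielsen fields entirely and flow along the Weil--Petersson gradient field $\sum_a \rho(\ell_{\mathscr{X}}(a))\,\nabla \ell_a$, summed over the short curves. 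By Riera's formula and Wolpert's estimates, $\langle \nabla\ell_a,\nabla\ell_b\rangle_{WP}>0$ for distinct closed geodesics, so along this flow \emph{no} geodesic length ever decreases: the sign of the cross-terms you worried about is not an obstacle but is precisely the monotonicity statement your argument is missing. Moreover this field is canonical, built from lengths and the WP metric alone, so it is automatically $\widehat{\Mod}_{g,n}$-equivariant, with no covers, completions, or partitions of unity needed.
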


We may similarly define $\Teich_{g,2n}^\sigma(\varepsilon)$ and $\sM_{g,2n}^\sigma(\varepsilon)$.  Let $\sC^\sigma(S)$ denote the simplicial complex whose $k$-simplices are $(k+1)$-tuples of $\sigma$-orbits of isotopy classes of $\sigma$-invariant essential simple closed curves in $S$ that may be realized disjointly.  We have that
\[ \Teich_{g,2n}^\sigma(\varepsilon) = \{ \mathscr{X} \in \Teich_{g,2n}^\sigma \mid \ell_\mathscr{X}(a) \geq \varepsilon \hspace{4pt} \forall a \in \sC^\sigma(S)\}\]
and
\[ \sM_{g,2n}^\sigma(\varepsilon) = \Mod_{g,2n}^\sigma \backslash \Teich_{g,2n}^\sigma(\varepsilon).\]

\begin{rem}\label{rem:TeichSigmaContract}
Notice that $\Teich_{g,2n}^\sigma(\varepsilon)$ is also given by the fixed locus of $\Psi_\sigma$ in $\Teich_{g,2n}(\varepsilon)$.  From this observation and Proposition \ref{prop:DefRetractTeich}, we have that $\Teich_{g,2n}^\sigma(\varepsilon)$ is contractible for $\varepsilon$ sufficiently small.
\end{rem}

We claim that $\sM_{g,2n}^\sigma(\varepsilon)$ is compact for $\varepsilon$ sufficiently small.  The proof of this result is analogous to the proof of compactness of $\sM_{g,n}(\varepsilon)$ (see \cite[12.4]{FarbMargalit_Primer}). Before stating and proving this claim, we note the following result due to Sepp\"al\"a.

\begin{thm}[Sepp\"al\"a \cite{Seppala_BersConstant}]\label{thm:BersConstant}
Let $S$ be a surface of genus $g$ with $2n$ marked points and an orientation reversing involution $\sigma$.  Suppose that $2-2g-2n<0$.  There exists a constant $L$ that is dependent on $S$ such that for each hyperbolic surface $X'$ that is homeomorphic to $S'$ there exists a $\sigma$-invariant pants decomposition $\{c_i\}_i$ of $X'$ such that $\ell_{X'}(c_i)\leq L$ for each $i$.
\end{thm}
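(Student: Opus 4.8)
The plan is to pass to the quotient surface and reduce the statement to a Bers-type length bound there, then lift the resulting decomposition. First I would arrange that $\sigma$ acts by isometries: since $\sigma$ is anti-holomorphic it preserves the conformal class and reverses orientation, so $\sigma^\ast$ of the (unique complete) hyperbolic metric on $X'$ is again that metric, whence $\sigma$ is an isometric involution. Then $\Fix(\sigma)$ is a disjoint union of $k$ simple closed geodesics, call it $F$, and the quotient $Y = X'/\sigma$ is a complete, finite-area, possibly non-orientable hyperbolic surface whose geodesic boundary is $\partial Y = \pi(F)$ and whose cusps are the $n$ images of the $2n$ cusps of $X'$. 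The map $\pi\colon X'\to Y$ is the double cover with deck group $\langle\sigma\rangle$, branched along $\partial Y$. Crucially, the topological type of $Y$ is determined by $(g,k,a)$, hence by $S$, and $\mathrm{Area}(Y)=2\pi|\chi(Y)|$ depends only on this type.

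The key intermediate claim is a \emph{bordered Bers theorem that is allowed to use arcs}: there is a constant $L'$, depending only on the topological type of $Y$, and a decomposition of $Y$ into right-angled hexagons and pairs of pants by a system of disjoint simple closed geodesics and simple orthogeodesic arcs (arcs meeting $\partial Y$ perpendicularly), each of length at most $L'$. I would prove this by the usual greedy/inductive scheme: at each stage cut along a shortest essential simple closed geodesic or orthogeodesic arc, and bound its length by a constant depending only on $\chi(Y)$. The standard collar lemma together with the fixed area $2\pi|\chi(Y)|$ forces, at each stage, the existence of such a short object. This is the step where arcs are essential: the boundary geodesics of $Y$ are exactly the fixed circles of $\sigma$ and can be arbitrarily long, so a curves-only decomposition would have unbounded cuffs; an orthogeodesic arc instead \emph{cuts across} a long boundary geodesic, and the area estimate bounds its length \emph{independently} of the boundary length.

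To finish, I would lift the system under $\pi$. A simple closed geodesic $c\subset Y$ pulls back either to two disjoint isometric copies swapped by $\sigma$, or to a single connected $\sigma$-invariant geodesic double-covering $c$; in both cases the lengths are at most $2L'$. An orthogeodesic arc $\alpha$ pulls back to the single closed curve $\hat\alpha=\alpha\cup\sigma\alpha$ meeting $F$ transversally at the branch points, and perpendicularity at $\partial Y$ guarantees that $\hat\alpha$ is a \emph{smooth} $\sigma$-invariant simple closed geodesic of length $2\,\ell(\alpha)\le 2L'$. The resulting collection $\{c_i\}$ is disjoint, simple, and $\sigma$-invariant, with all lengths at most $L:=2L'$. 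Because a hexagon lifts (by folding across its boundary-arc sides on $\partial Y$) to a single pair of pants and an embedded pair of pants downstairs lifts to a swapped pair of pants upstairs, the system $\{c_i\}$ cuts $X'$ into pairs of pants; this is precisely a symmetric pants decomposition in the sense of Buser and Sepp\"al\"a \cite{BuserSeppala_SymmetricPantsDecompostionsOfRiemannSurfaces}, which gives the theorem with the stated $L$.

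The main obstacle is the intermediate claim, and within it two points deserve care. The first is producing a \emph{short} essential arc crossing a long boundary geodesic with a bound depending only on $\mathrm{Area}(Y)$; I expect the embedded-collar/area argument to handle this, but it must be set up so that boundary-parallel and cusp-parallel objects are correctly excluded at each inductive step. The second is the Euler-characteristic bookkeeping in the lift: one must verify that the chosen building blocks downstairs (hexagons with their odd sides on $\partial Y$, and interior pants) are exactly those whose preimages are pairs of pants, so that the lifted system is a genuine pants decomposition rather than a decomposition into higher-complexity pieces.
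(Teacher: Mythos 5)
First, a point of comparison: the paper does not prove this statement at all --- it is imported as a citation to Sepp\"al\"a (the standard reference for this kind of result is Buser--Sepp\"al\"a \cite{BuserSeppala_SymmetricPantsDecompostionsOfRiemannSurfaces}), so your proposal can only be measured against that literature. Your overall route --- make $\sigma$ isometric, pass to the quotient Klein surface $Y=X'/\sigma$, prove a bordered Bers-type theorem using both closed geodesics and orthogeodesic arcs, then lift --- is indeed the strategy used there. However, your key intermediate claim is false as stated, and the failure is precisely at the first point you flag as ``deserving care.'' Your claim forces every boundary circle of $Y$ (equivalently, every fixed circle of $\sigma$) to be crossed by arcs of length at most $L'$, since you require all boundary-adjacent pieces to be right-angled hexagons and all other pieces to be \emph{interior} pants. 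But a fixed circle of $\sigma$ can be arbitrarily \emph{short}: its length is a free Fenchel--Nielsen parameter on $\Teich_{g,2n}^\sigma$. If $\beta\subset\partial Y$ is a boundary geodesic of length $\ell$, the collar lemma gives an embedded half-collar about $\beta$ whose width $w(\ell)$ satisfies $\sinh w(\ell)\,\sinh(\ell/2)=1$, hence $w(\ell)\to\infty$ as $\ell\to 0$; any essential arc with an endpoint on $\beta$ must leave this half-collar, so its length is at least $w(\ell)$. Thus no constant $L'$ depending only on the topological type of $Y$ can bound arcs meeting a short boundary circle, and a decomposition into ``hexagons and interior pants'' with uniformly bounded system simply does not exist for such surfaces. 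Long boundary circles are not the only enemy; short ones are too, and they need the opposite treatment.

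The missing idea is a length dichotomy on $\partial Y$, which is how the actual proofs proceed: fix a threshold depending only on $\chi(Y)$; boundary circles longer than the threshold are crossed by short orthogeodesic arcs (there your area/collar argument is sound), while boundary circles shorter than the threshold are \emph{kept as cuffs}, i.e., the adjacent pieces are pairs of pants having these circles of $\partial Y$ among their boundary curves. This forces a matching correction in your lifting bookkeeping: a pants piece of $Y$ with a cuff on $\partial Y$ does \emph{not} lift to two disjoint swapped pants --- its preimage is connected, namely two pants glued along the corresponding fixed circle $\hat\beta_i$ of $\sigma$ --- so those fixed circles must themselves be adjoined to the lifted curve system (they are $\sigma$-invariant and short by the dichotomy, so the length bound survives). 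With that amendment the rest of your outline is correct: the isometry of $\sigma$, the smoothness of the lifted geodesics $\hat\alpha=\alpha\cup\sigma\alpha$ coming from perpendicularity, the doubling of hexagons into pants, and the treatment of interior curves with connected (one-sided) versus disconnected (two-sided) preimage all work as you describe.
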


\begin{prop}\label{prop:MumfordCompactness}
Let $\varepsilon>0$ be given.  If $\varepsilon<\delta$ and $\varepsilon<L$, then the space $\sM_{g,2n}^\sigma(\varepsilon)$ is compact.
\end{prop}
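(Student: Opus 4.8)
The plan is to transcribe the standard proof of Mumford compactness for $\sM_{g,n}(\varepsilon)$ (see \cite[12.4]{FarbMargalit_Primer}) into the $\sigma$-equivariant setting: I will exhibit a compact subset $K \subseteq \Teich_{g,2n}^\sigma(\varepsilon)$ whose $\Mod_{g,2n}^\sigma$-translates cover $\Teich_{g,2n}^\sigma(\varepsilon)$. Granting this, the quotient map $\Teich_{g,2n}^\sigma(\varepsilon) \to \sM_{g,2n}^\sigma(\varepsilon)$ sends $K$ onto all of $\sM_{g,2n}^\sigma(\varepsilon)$, so the latter is a continuous image of a compact space and is therefore compact. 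The hypotheses enter as follows: $\varepsilon < \delta$ guarantees, via Proposition \ref{prop:ShortGeodesics}, that the truncation $\Teich_{g,2n}^\sigma(\varepsilon)$ is cut out by a sensible length condition, while $\varepsilon < L$ ensures that the length interval $[\varepsilon, L]$ appearing below is nonempty.

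To build $K$, I first fix a reference $\sigma$-invariant pants decomposition $\mathcal{P} = \{c_1, \dots, c_N\}$ of $S$, with $N = 3g-3+2n$. Since the $c_i$ are pairwise disjoint and $\sigma$ permutes them, each $c_i$ is $\sigma$-invariant and hence a vertex of $\sC^\sigma(S)$; consequently every structure in $\Teich_{g,2n}^\sigma(\varepsilon)$ assigns each $c_i$ length at least $\varepsilon$. I then pass to the Fenchel--Nielsen coordinates adapted to $\mathcal{P}$, namely the length and twist parameters subject to the reality constraints that cut $\Teich_{g,2n}^\sigma$ out of $\Teich_{g,2n}$ as the fixed locus of $\Psi_\sigma$ and that identify $\Teich_{g,2n}^\sigma \cong \IR^{3g-3+2n}$. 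In these coordinates the curves individually fixed by $\sigma$ contribute only a length parameter (their twist pinned to a $\Psi_\sigma$-invariant value), while each pair $\{c_i, \sigma(c_i)\}$ swapped by $\sigma$ contributes a common length together with a single surviving twist. I define $K$ to be the locus where every length coordinate lies in the compact interval $[\varepsilon, L]$, with $L$ the constant of Theorem \ref{thm:BersConstant}, and every surviving twist lies in a fixed compact fundamental domain for the equivariant twisting described below. As a closed and bounded subset of the coordinate space, $K$ is compact.

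For the covering, let $\mathscr{X} \in \Teich_{g,2n}^\sigma(\varepsilon)$ be arbitrary. Theorem \ref{thm:BersConstant} supplies a $\sigma$-invariant pants decomposition $\mathcal{P}'$ of the underlying hyperbolic surface all of whose curves have length at most $L$. There are only finitely many $\Mod_{g,2n}^\sigma$-orbits of $\sigma$-invariant pants decompositions, so some $\phi \in \Mod_{g,2n}^\sigma$ carries $\mathcal{P}'$ to $\mathcal{P}$; replacing $\mathscr{X}$ by $\phi \cdot \mathscr{X}$, which remains in $\Teich_{g,2n}^\sigma(\varepsilon)$ by the $\Mod_{g,2n}^\sigma$-invariance of that truncation, puts every length coordinate of $\mathscr{X}$ into $[\varepsilon, L]$. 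It remains to reduce the surviving twists. For each swapped pair $\{c_i, \sigma(c_i)\}$ the mapping class $T_{c_i} T_{\sigma(c_i)}^{-1}$ commutes with $\sigma$ --- here one uses that $\sigma$ is orientation reversing, so that $\sigma T_{c_i} \sigma^{-1} = T_{\sigma(c_i)}^{-1}$ --- and hence lies in $\Mod_{g,2n}^\sigma$; applying it translates the corresponding twist coordinate by $\ell_{\mathscr{X}}(c_i) \le L$, so a suitable product of these equivariant twists moves all surviving twists into the chosen fundamental domain. The resulting $\Mod_{g,2n}^\sigma$-translate of $\mathscr{X}$ then lies in $K$, which establishes the covering and completes the argument.

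The main obstacle is making the coordinate description of the second paragraph precise. The delicate point is to pin down exactly which Fenchel--Nielsen twist parameters survive on the fixed locus of $\Psi_\sigma$: one must check that a pants curve individually fixed by $\sigma$ contributes only its length, with the twist forced to a discrete $\Psi_\sigma$-invariant value, whereas a swapped pair contributes one genuine real twist --- a distribution that is consistent with, and in fact forced by, the dimension equality $\dim \Teich_{g,2n}^\sigma = 3g-3+2n$. One must then verify that the equivariant twists $T_{c_i} T_{\sigma(c_i)}^{-1}$ act on precisely these surviving coordinates by translation through the curve lengths, so that a compact fundamental domain genuinely exists. The remaining input, finiteness of $\Mod_{g,2n}^\sigma$-orbits of $\sigma$-invariant pants decompositions, is cleanest to obtain by transporting the question to the quotient surface $S/\sigma$ through the identification of $\Mod_{g,2n}^\sigma$ with a mapping class group of $S/\sigma$ recorded in Section \ref{sect:BackRAC}, where the ordinary change-of-coordinates principle for pants decompositions applies.
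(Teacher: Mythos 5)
Your proof is correct (modulo the standard Fenchel--Nielsen facts you flag at the end, which the paper also invokes without proof), and it uses the same two key inputs as the paper: Sepp\"al\"a's equivariant Bers constant (Theorem \ref{thm:BersConstant}) and the finiteness of types of $\sigma$-invariant pants decompositions, combined with length bounds $[\varepsilon,L]$ and twist normalization in Fenchel--Nielsen coordinates. The packaging, however, differs. The paper argues via sequential compactness: it lifts a sequence from $\sM_{g,2n}^\sigma(\varepsilon)$ to $\Teich_{g,2n}^\sigma$, uses Theorem \ref{thm:BersConstant} to produce bounded-length $\sigma$-invariant pants decompositions $\sP_i$, passes to a subsequence with $f_i(\sP_i)=\sP_1$, and extracts a convergent subsequence from a compact box of coordinates; this requires the observation (made explicitly in the paper) that the quotient inherits a metric topology, so that sequential compactness suffices. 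You instead exhibit a single compact set $K \subseteq \Teich_{g,2n}^\sigma(\varepsilon)$ whose $\Mod_{g,2n}^\sigma$-translates cover the truncated Teichm\"uller space, and conclude by continuity of the quotient map --- a coarse-fundamental-domain argument that sidesteps metrizability considerations entirely. A genuine merit of your version is its care with equivariance: the paper's proof takes its normalizing maps $f_i$ in $\Mod_{g,2n}$ and asserts the twists ``can be chosen to lie in $[0,2\pi]$,'' but for $f_i \cdot \mathscr{X}_i$ to remain in the fixed locus $\Teich_{g,2n}^\sigma$ and for the conclusion to descend to $\sM_{g,2n}^\sigma$ (rather than to $\sM_{g,2n}$), the normalizing maps must lie in $\Mod_{g,2n}^\sigma$ --- exactly what your change-of-coordinates map $\phi$, obtained through the quotient surface $S/\sigma$, and your equivariant twists $T_{c_i}T_{\sigma(c_i)}^{-1}$ (whose commutation with $\sigma$ you correctly derive from $\sigma T_{c_i}\sigma^{-1}=T_{\sigma(c_i)}^{-1}$) provide. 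So your route makes explicit, and repairs, a point over which the paper's proof is terse.
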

\begin{proof}
Recall that $\Teich_{g,2n}^\sigma(\varepsilon)$ is a metric space and thus $\sM_{g,2n}^\sigma(\varepsilon)$ inherits the topology of a metric space from $\Teich_{g,2n}^\sigma(\varepsilon)$.  So to prove compactness, it suffices to prove sequential compactness.

Let $\varepsilon>0$ be given and let $\{X_i\}$ be a sequence in $\sM_{g,2n}^\sigma(\varepsilon)$.  Let $\mathscr{X}_i$ denote a lift of $X_i$ to $\Teich_{g,2n}^\sigma$.  By Theorem \ref{thm:BersConstant}, there exists a $\sigma$-invariant pants decomposition $\sP_i$ such that $\ell_{\mathscr{X}_i}(\gamma) \in [\varepsilon,L]$ for each $\gamma \in \sP_i$.  There exists a finite number of topological types of pants decompositions.  So there exists a subsequence, which we will also denote by $\mathscr{X}_i$, and homeomorphisms $f_i \in \Mod_{g,2n}$ such that $f_i(\sP_i) = \sP_1$.  Now we may fix Fenchel--Nielsen coordinates with respect to $\sP_1$.  Notice that the $f_i \cdot \mathscr{X}_i$ have length parameters in $[\varepsilon,L]$.  Each of the twist parameters can be chosen to lie in $[0,2\pi]$.  The subsequence $\mathscr{X}_i$ lie in a compact subset of Euclidean space and consequently there exists a convergent subsequence.  The desired result follows.
\end{proof}

\subsection{A Subspace at Infinity}\label{subsect:TeichTheory_Infinity}
These truncated moduli spaces provide a means for us to define a notion of infinity for moduli spaces of real algebraic curves.  Recall that given a hyperbolic structure $\mathscr{X}$ in $\Teich_{g,2n}^\sigma$ and fixed $M \in \IR_{>0}$, the set of isotopy classes of simple closed curves in $S$ with lengths $\ell_{\mathscr{X}}\leq M$ is finite \cite[12.3.1]{FarbMargalit_Primer}.  From this fact, we have the following proposition.

\begin{prop}\label{prop:Exhaustion}
Let $S$ be a surface of genus $g$ with $2n$ marked points and an orientation reversing involution $\sigma$.  For $\varepsilon$ sufficiently small, the subspaces $\sM_{g,2n}^\sigma(\varepsilon)$ exhaust $\sM_{g,2n}^\sigma$, that is,
\[ \sM_{g,2n}^\sigma = \bigcup_{\varepsilon>0} \sM_{g,2n}^\sigma(\varepsilon).\]
\end{prop}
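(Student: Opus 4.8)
The plan is to prove the two inclusions separately. The inclusion $\bigcup_{\varepsilon>0} \sM_{g,2n}^\sigma(\varepsilon) \subseteq \sM_{g,2n}^\sigma$ is immediate, since $\Teich_{g,2n}^\sigma(\varepsilon) \subseteq \Teich_{g,2n}^\sigma$ for every $\varepsilon$ and passing to the quotient by $\Mod_{g,2n}^\sigma$ preserves the containment. The content is therefore the reverse inclusion: every point of $\sM_{g,2n}^\sigma$ lies in $\sM_{g,2n}^\sigma(\varepsilon)$ for all sufficiently small $\varepsilon$.

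First I would reduce the statement to a positivity claim about a single hyperbolic structure. Fix a point $X \in \sM_{g,2n}^\sigma$ and choose a lift $\mathscr{X} \in \Teich_{g,2n}^\sigma$. Since lengths transform as $\ell_{\phi \cdot \mathscr{X}}(a) = \ell_{\mathscr{X}}(\phi^{-1} a)$ for $\phi \in \Mod_{g,2n}^\sigma$, and $\phi$ permutes the vertices of $\sC^\sigma(S)$, the quantity
\[ \operatorname{sys}(\mathscr{X}) := \inf_{a \in \sC^\sigma(S)} \ell_{\mathscr{X}}(a) \]
is independent of the chosen lift and descends to a function on $\sM_{g,2n}^\sigma$; here the infimum runs over the vertices of $\sC^\sigma(S)$, that is, over $\sigma$-orbits of isotopy classes of $\sigma$-invariant essential simple closed curves, and $\ell_{\mathscr{X}}$ is well defined on such an orbit because $\mathscr{X}$ is $\sigma$-invariant, so the two isotopy classes in an orbit have equal length. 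By the definition of $\Teich_{g,2n}^\sigma(\varepsilon)$, we have $\mathscr{X} \in \Teich_{g,2n}^\sigma(\varepsilon)$ exactly when $\operatorname{sys}(\mathscr{X}) \geq \varepsilon$. It therefore suffices to prove $\operatorname{sys}(\mathscr{X}) > 0$ and then take any $\varepsilon$ with $0 < \varepsilon \leq \operatorname{sys}(\mathscr{X})$, shrinking further so that $\varepsilon < \delta$ and $\varepsilon < L$ if one wishes to remain in the range where the earlier retraction and compactness results apply.

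The positivity of $\operatorname{sys}(\mathscr{X})$ is where the finiteness input enters, and I would argue by contradiction. If $\operatorname{sys}(\mathscr{X}) = 0$, then there is a sequence of pairwise distinct vertices $a_i$ of $\sC^\sigma(S)$ with $\ell_{\mathscr{X}}(a_i) \to 0$, so all but finitely many satisfy $\ell_{\mathscr{X}}(a_i) \leq 1$. Representing each such orbit by one of its isotopy classes produces infinitely many distinct isotopy classes of essential simple closed curves of length at most $1$, contradicting the fact recalled immediately above that the set of isotopy classes of simple closed curves with $\ell_{\mathscr{X}} \leq M$ is finite for every fixed $M$ \cite[12.3.1]{FarbMargalit_Primer}; the $\sigma$-invariant classes form a subset of all isotopy classes, so the finiteness is inherited. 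Hence $\operatorname{sys}(\mathscr{X}) > 0$, and the reverse inclusion follows.

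The main obstacle, and really the only non-formal step, is this last positivity claim, and it is resolved entirely by the cited finiteness statement, which is itself a consequence of the collar lemma. The remaining care is bookkeeping: verifying that $\ell_{\mathscr{X}}$ is well defined on $\sigma$-orbits and is invariant under the $\Mod_{g,2n}^\sigma$-action, so that membership of $X$ in $\sM_{g,2n}^\sigma(\varepsilon)$ does not depend on the choice of lift $\mathscr{X}$.
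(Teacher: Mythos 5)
Your proposal is correct and follows the same route as the paper: the paper deduces the proposition directly from the finiteness of the set of isotopy classes of curves with $\ell_{\mathscr{X}} \leq M$ (the same citation to \cite[12.3.1]{FarbMargalit_Primer}), which is exactly your positivity-of-the-systole argument. You have merely made explicit the bookkeeping (well-definedness of $\ell_{\mathscr{X}}$ on $\sigma$-orbits and independence of the lift) that the paper leaves implicit.
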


For fixed $\varepsilon<\delta$, we set
\[ \sV_{g,2n}^\sigma := \Teich_{g,2n}^\sigma - \Teich_{g,2n}^\sigma(\varepsilon)\]
and
\[\sU_{g,2n}^\sigma := \sM_{g,2n}^\sigma - \sM_{g,2n}^\sigma(\varepsilon).\footnote{Perhaps it would be more precise to write $\sU_{g,2n}^\sigma(\varepsilon) := \sM_{g,2n}^\sigma - \sM_{g,2n}^\sigma(\varepsilon)$; however, since the homotopy type of $\sM_{g,2n}^\sigma - \sM_{g,2n}^\sigma(\varepsilon)$ is independent of $\varepsilon$ for $\varepsilon<\delta$, we choose to omit $\varepsilon$ from our notation and simply write $\sU_{g,2g}^\sigma$.  We justify the notation for $\sV_{g,2n}^\sigma$ in the same manner.} \]
In light of Proposition \ref{prop:Exhaustion}, the subspace $\sU_{g,2n}^\sigma$ of $\sM_{g,2n}^\sigma$ may be viewed as the subspace of $\sM_{g,2n}^\sigma$ at infinity.

\section{Complexes Associated to Surfaces}\label{sect:CurveComplexes}

It is well-known that the topology of the boundary of $\Teich_{g,n}(\varepsilon)$ is encoded by the curve complex of a surface $S$ of genus $g$ with $n$ marked points.  Recall that the \emph{curve complex} associated to an orientable surface $S$ is the simplicial complex whose $k$-simplices are $(k+1)$-tuples of isotopy classes of essential simple closed curves in $S$ that may be realized disjointly.  As we will show in Lemma \ref{lem:PropOfThick}, the topology of the boundary of $\Teich_{g,2n}^\sigma(\varepsilon)$ is encoded by the intersection patterns of $\sigma$-orbits of $\sigma$-invariant essential simple closed curves in a surface $S$ of genus $g$ with $2n$ marked points and an orientation reversing involution $\sigma$.  In this section, we discuss complexes associated to surfaces.  First, we discuss the curve complex for an arbitrary surface (possibly with boundary and possibly non-orientable) and its topology.  Second, we define a new complex associated to a surface called the $\abc$-complex and we compute its homotopy type.

For this section, let $F$ be a surface (possibly with boundary and possibly non-orientable).  A \emph{simple (closed) curve} $\alpha$ in $F$ is an embedding $\alpha: I \to F$ ($\alpha: S^1 \to F$).  We say that a simple curve in $F$ is \emph{essential} if it is not isotopic to a curve in a regular neighborhood of a point, puncture, or boundary component.  A curve that is not essential is said to be \emph{nonessential}.  An \emph{essential arc} in $F$ is an essential curve $\alpha$ in $F$ such that $\alpha(0)$ and $\alpha(1)$ lie in the boundary of $F$ or in the collection of marked points of $F$.  A simple closed curve is said to be \emph{two-sided} if a regular neighborhood of the curve is an annulus.  A simple closed curve is said to be \emph{one-sided} if a regular neighborhood of the curve is a M\"{o}bius band.  A curve $\mu$ in a surface $F$ is called a \emph{M\"{o}bius curve} if one component of $F - \mu$ is a M\"{o}bius band, that is, $\mu$ is the boundary of an embedded M\"{o}bius band in $F$.  Notice that there is a bijective correspondence between isotopy classes of one-sided curves and isotopy classes of M\"{o}bius curves.  Indeed, for each isotopy class of one-sided curves there is a unique isotopy class of M\"{o}bius curves given by the boundaries of the regular neighborhoods of the one-sided curves.

Finally, since the geometric realization functor does not change the homotopy type of a simplicial complex, we abusively switch back and forth between simplicial complexes and their geometric realizations throughout the remainder of this paper.

\subsection{The Curve Complex}\label{subsect:CurveComplexes_Curve}

We generalize our definition of the curve complex to include non-orientable surfaces.

\begin{defn}\label{defn:CurveComplex}
The \emph{curve complex} associated to a surface $F$, denoted $\sC(F)$, is the simplicial complex whose vertices are the isotopy classes of essential simple closed curves in $F$, excluding M\"{o}bius curves.  A set of $k+1$ vertices $\{v_0,\dots,v_k\}$ defines a $k$-simplex if the geometric intersection number $i(v_i,v_j)$ is zero for all $i$ and $j$ in $\{0,\dots,k\}$.
\end{defn}

An obvious question is why we do not include M\"{o}bius curves in the curve complex.  As we will see in Remark \ref{rem:MobiusLiftingArg}, M\"{o}bius curves are not needed to encode the topology of the boundary of $\Teich_{g,2n}^\sigma(\varepsilon)$.  Moreover, the homotopy type of $\sC(F)$ is independent of whether or not isotopy classes of M\"{o}bius curves are included in $\sC(F)$.

Combining the results of Harer and Ivanov, we have the following statement about the connectivity of the curve complex of a surface.

\begin{thm}[Harer \cite{Harer_VirtualCohomDim}, Ivanov \cite{Ivanov_VirtualCohomDim}]\label{thm:CCConnectivity}
Let $F$ be a surface with $g$ handles, $c$ cross caps, and $n$ marked points.  The curve complex of $F$ is $d(g,c,n)$-connected where

\[ d(g,c,n) = \begin{cases}
n-3 		& \mbox{if    }\,\,\,g = 0 = c, \\
2g+c-1	& \mbox{if    }\,\,\,n = 0, \\
2g+c+n-2	& \mbox{if    }\,\,\,n>0,g>0,c=0, \\
2g+c+n-2	& \mbox{if    }\,\,\,n>0,g=0, c>3.\\ \end{cases}\]
\end{thm}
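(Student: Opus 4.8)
The plan is to follow the strategy of Harer and Ivanov, establishing the connectivity bound by induction on the complexity of $F$ (measured, say, by $-\chi(F)$), with the contractibility of an associated \emph{arc complex} as the engine of the induction. The base cases are the low-complexity surfaces: spheres and projective planes with few marked points, together with the once- and twice-punctured tori and Klein bottles. In these cases $\sC(F)$ is empty, discrete, or small enough to analyze by hand, and the asserted value of $d(g,c,n)$ must be checked directly against the actual homotopy type. These base cases also pin down the four branches of the formula, according to whether $F$ is a sphere, is closed with no marked points, has positive genus and no cross-caps, or is non-orientable.

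The heart of the argument is the arc complex $\mathcal{A}(F)$, whose $k$-simplices are collections of $k+1$ pairwise-disjoint, pairwise-non-isotopic essential arcs with endpoints at the marked points or on the boundary. First I would establish that $\mathcal{A}(F)$ is contractible whenever $F$ has a marked point or boundary component. The cleanest route is Hatcher's flow argument: fix a reference arc $a$, and define a deformation that surgers an arbitrary arc system along $a$ so as to push it into the star of $a$, using that intersection number with $a$ strictly decreases under the surgery and that the combing is canonical up to isotopy. Contractibility of $\mathcal{A}(F)$ is the foundation from which the connectivity of $\sC(F)$ is extracted, together with the observation that cutting $F$ along an arc or a curve produces a strictly simpler surface to which the inductive hypothesis applies.

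To pass from $\mathcal{A}(F)$ to $\sC(F)$, I would interpolate through the arc-and-curve complex $\mathcal{AC}(F)$ and run a \textbf{bad-simplex} / link argument in the style of Hatcher--Wahl. Given a map $S^m \to \sC(F)$ with $m$ below the asserted connectivity, I would first homotope it into the (highly connected) complex $\mathcal{AC}(F)$, then eliminate the arcs one at a time by a surgery that replaces each offending arc with curves, filling the sphere in over the links of the bad simplices. Each such link is a join involving the curve complex of the cut surface, whose connectivity is controlled by induction once one tracks precisely how $g$, $c$, and $n$ change under cutting; the arithmetic of these changes is exactly what forces the four cases of the formula for $d(g,c,n)$.

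The main obstacle, and Ivanov's essential contribution beyond the orientable theory of Harer, is the non-orientable case. One-sided curves and M\"{o}bius curves have no orientable analogue, and cutting along a one-sided curve decreases the cross-cap count rather than the genus, so the induction must treat cross-caps as a separate parameter and the surgery must account for curves whose regular neighborhood is a M\"{o}bius band rather than an annulus. I expect the delicate points to be, first, verifying that excluding M\"{o}bius curves from $\sC(F)$ (as in Definition \ref{defn:CurveComplex}) does not lower the connectivity, and second, checking that the link computations remain valid when the cut surface is non-orientable. Managing this cross-cap bookkeeping uniformly across the inductive step is where the orientable (Harer) and non-orientable (Ivanov) inputs genuinely diverge, and it is the part of the argument I would scrutinize most carefully.
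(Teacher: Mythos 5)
First, a structural point: the paper does not prove Theorem \ref{thm:CCConnectivity} at all. It is imported as background, attributed to Harer and Ivanov, and the paper's own contribution begins only with Lemma \ref{lem:CCHomDim}. So there is no internal proof to compare your attempt against; what you have written is a reconstruction of the literature argument. As a reconstruction, your outline --- contractibility of the arc complex via Hatcher's flow, induction on complexity, a bad-simplex/link argument in the style of Hatcher--Wahl to trade arcs for curves, and separate bookkeeping for cross caps and M\"{o}bius curves in the non-orientable case --- is indeed the standard route, and it is the right shape of argument for a theorem of this kind.

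There is, however, a concrete failure point, and it sits exactly at the step you propose first: checking the base cases ``directly against the actual homotopy type.'' With the usual meaning of $d$-connected, that check refutes the statement as written. For the once-punctured torus ($g=1$, $c=0$, $n=1$) the formula gives $d=2g+c+n-2=1$, so the theorem asserts $\sC(S_{1,1})$ is simply connected; but any two non-isotopic essential simple closed curves on $S_{1,1}$ intersect, so $\sC(S_{1,1})$ is an infinite discrete set --- not even path-connected. Similarly $d(0,0,5)=2$, while $\sC(S_{0,5})$ is a connected graph containing cycles (e.g.\ the Petersen subgraph spanned by curves enclosing pairs of punctures), hence $0$-connected but not $2$-connected. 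The stated values exceed the true connectivity by two, and this is visible internally in the paper as well: Theorem \ref{thm:CCConnectivity} combined with Lemma \ref{lem:CCHomDim} would make all reduced homology of $\sC(F)$ vanish in a simply connected complex, forcing contractibility and contradicting that lemma, whereas Corollary \ref{cor:CCHomotopyType} asserts a wedge of spheres of dimension $d(g,c,n)-1$, i.e.\ connectivity $d(g,c,n)-2$. So the correct target for your induction is $\bigl(d(g,c,n)-2\bigr)$-connectedness (equivalently the wedge dimensions of Corollary \ref{cor:CCHomotopyType}); an induction aimed at the literal stated values cannot close because its base cases are false. Once the target is corrected, your plan is viable, though as written it remains a roadmap rather than a proof: the surgery argument, the link-connectivity computations after cutting, and the verification that excluding M\"{o}bius curves does not change the homotopy type are all named but not carried out.
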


To compute the homotopy type of the curve complex, we need the following lemma.

\begin{lem}\label{lem:CCHomDim}
Let $F$ be a surface with $g$ handles, $c$ cross caps, and $n$ marked points.  The curve complex of $F$ is not contractible and its homological dimension is less than or equal to $d(g,c,n)$ where
\[ d(g,c,n) = \begin{cases}
n-4 		& \mbox{if    }\,\,\,g = 0 = c, \\
2g+c-2	& \mbox{if    }\,\,\,n = 0, \\
2g+c+n-3	& \mbox{if    }\,\,\,n>0,g>0,c=0, \\
2g+c+n-3	& \mbox{if    }\,\,\,n>0,g=0, c>3.\\ \end{cases}\]
\end{lem}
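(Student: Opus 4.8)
The plan is to deduce Lemma~\ref{lem:CCHomDim} from the connectivity statement of Theorem~\ref{thm:CCConnectivity} together with a bound on the dimension of $\sC(F)$ as a simplicial complex. The key observation is that the homological dimension of a simplicial complex is bounded above by its geometric dimension, i.e.\ by the number of vertices in a maximal simplex minus one. A maximal collection of pairwise-disjoint, pairwise-nonisotopic essential simple closed curves in $F$ (excluding M\"obius curves) is a \emph{pants-type decomposition}, and its cardinality can be computed from the Euler characteristic of $F$. So first I would compute, for each of the four topological regimes, the maximal number of disjoint nonisotopic essential curves; this gives the top dimension of $\sC(F)$, and hence an a priori upper bound on its homological dimension.

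\textbf{Computing the dimension of $\sC(F)$.} For a surface with $g$ handles, $c$ cross caps, and $n$ marked points, cutting along a maximal system of disjoint curves decomposes $F$ into pairs of pants (and, in the non-orientable case, possibly M\"obius-band-containing pieces, which must be handled with care since M\"obius curves are excluded as vertices). Counting via Euler characteristic $\chi(F) = 2 - 2g - c - n$ and the fact that each pair of pants has Euler characteristic $-1$, one reads off the number of cutting curves. The target bounds $2g+c-2$, $2g+c+n-3$, and $n-4$ are exactly what this count produces once one subtracts one for the passage from number of vertices to dimension; I would verify each case arithmetically and pay attention to the low-complexity exceptional cases (sphere with few punctures, and the non-orientable conditions $c>3$) where the generic pants count degenerates.

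\textbf{Non-contractibility.} For the claim that $\sC(F)$ is not contractible, I would argue that $\sC(F)$ is homotopy equivalent to a wedge of spheres of dimension equal to its homological dimension, or at least has nontrivial top homology. The standard approach, following Harer, is that a complex which is $(d-1)$-connected (from Theorem~\ref{thm:CCConnectivity}, whose connectivity bounds are exactly one less than the dimension bounds here) and has geometric dimension $d$ is homotopy equivalent to a wedge of $d$-spheres; since $\sC(F)$ is not a single point and is known to be infinite, the wedge is nonempty and $\sC(F)$ is not contractible. Concretely, a $(d-1)$-connected complex of dimension $d$ has all reduced homology concentrated in degree $d$, where it is free, so non-contractibility reduces to exhibiting that $\tilde H_d(\sC(F))$ is nonzero — this is where I expect the genuine content to lie.

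\textbf{Main obstacle.} The hardest part will be the non-contractibility / nonvanishing of top homology, rather than the dimension count, which is bookkeeping. Establishing that $\tilde H_d \neq 0$ typically requires either citing the precise wedge-of-spheres structure from Harer's and Ivanov's work, or constructing an explicit nontrivial cycle. A secondary subtlety is the interaction with M\"obius curves in the non-orientable cases: since they are excluded as vertices but still constrain how curves can be realized disjointly, I would need to confirm that the excluded M\"obius curves do not lower the maximal simplex dimension below the claimed value, and that the connectivity bounds of Theorem~\ref{thm:CCConnectivity} (which are stated for the same complex $\sC(F)$) align index-by-index with the homological dimension bounds — noting that each target value here is exactly one more than the corresponding connectivity value, which is precisely the relationship needed for the wedge-of-spheres conclusion.
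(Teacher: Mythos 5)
There is a genuine gap, and it is in the step you dismissed as bookkeeping. The homological dimension bound in Lemma~\ref{lem:CCHomDim} is \emph{not} the geometric dimension of $\sC(F)$, and your Euler-characteristic count does not reproduce the stated values except in the planar case $g=c=0$. A pants decomposition of an orientable surface with $g$ handles and $n$ punctures has $3g-3+n$ curves, so $\dim \sC(F) = 3g-4+n$, which strictly exceeds the target $2g+n-3$ once $g \geq 2$ (and exceeds $2g-2$ once $g \geq 3$ in the closed case); the non-orientable cases are no better. So the entire content of the lemma is that $\tilde{H}_i(\sC(F))$ vanishes in the \emph{intermediate} range $d(g,c,n) < i \leq \dim\sC(F)$, and neither of your two inputs can produce that: the geometric dimension is too large, and connectivity (Theorem~\ref{thm:CCConnectivity}) only controls homology \emph{below} $d(g,c,n)$, never above it. (Note also that, as stated in the paper, the values in Theorem~\ref{thm:CCConnectivity} are one \emph{more} than those in the lemma, not one less, so even the index alignment you invoke does not hold literally.) The paper's proof supplies exactly the missing high-degree vanishing argument, and it is not a citation: it is an induction on the complexity $N_F = 2c+3g+n-3$, run on the barycentric subdivision $\sC(F)^\circ$ filtered by the weight of multicurves. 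The subcomplex $X_k$ of weight $\geq k$ is built from $X_{k+1}$ by coning each weight-$k$ vertex $v$ onto its link, and that link is identified with the join of the curve complexes $\sC(F_1),\dots,\sC(F_t)$ of the complementary subsurfaces, whose dimensions are bounded inductively and summed via an Euler characteristic computation to stay one below the target $D$.

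Your non-contractibility argument has a second gap. The implication ``$(d-1)$-connected and $d$-dimensional $\Rightarrow$ wedge of $d$-spheres'' cannot be applied, since, as above, $\dim \sC(F) \neq d(g,c,n)$ in general; and ``infinite, hence a nonempty wedge'' is a non sequitur --- infinite contractible flag complexes are common (the arc complex, or the star of any vertex). Your fallback of citing the wedge-of-spheres structure from Harer and Ivanov is both circular and unavailable: circular because in this paper the wedge structure (Corollary~\ref{cor:CCHomotopyType}) is \emph{deduced from} this lemma, and unavailable because, as Remark~\ref{rem:HarerGap} points out, Harer's original argument does not rule out contractibility even in the orientable case --- closing that gap is precisely one of the paper's contributions. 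The paper's replacement is combinatorial: for two weight-$k$ vertices $v,w$ of $X_k - X_{k+1}$, either some of their curves intersect, in which case $L(v) \cap L(w) = \varnothing$, or all are disjoint, in which case $L(v) \cap L(w)$ is the link of the minimal-weight multicurve $x$ containing both, which is inductively a non-contractible wedge of spheres; coning $v$ and $w$ onto their links then suspends $L(x)$, so $X_k$ remains non-contractible. To repair your proposal you would need to supply an argument of this kind (or an explicit nontrivial cycle in degree $d(g,c,n)$), in addition to an actual proof of the intermediate-degree vanishing.
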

\begin{proof}
The case where $F$ is orientable is proven by Harer in \cite{Harer_VirtualCohomDim}.  So it suffices to prove the result for non-orientable surfaces.  Our argument is a generalization of Harer's argument to possibly non-orientable surfaces.

Given a surface $F$ with $g$ handles, $c$ cross caps, and $n$ boundary components, define the invariant $N_F = 2c+3g+n-3$.  Suppose by way of induction that the result holds for surfaces $F$ with invariant $N_F \leq N$.  If $N_F = 0$, then either $c = 0$ or $c \neq 0$.  If $c = 0$, then by the work of Harer we are done.  If $c \neq 0$, then $c=1$ and $s=1$ or $s=3$.  It follows that $\sC(F)$ is a point or empty, respectively.  In either case, the desired result holds.

To handle the inductive case let $\sC(F)^\circ$ denote the barycentric subdivision of $\sC(F)$.  The vertices of $\sC(F)^\circ$ correspond to multicurves\footnote{A \emph{multicurve} in a surface $F$ is a finite collection of isotopy classes of simple closed curves in $F$ that may be realized disjointly.} and faces are added for chains of proper inclusions among these multicurves.  Given a vertex $v \in \sC(F)^\circ$, we define the \emph{weight of $v$}, denoted $w(v)$, to be the number of curves contained in the multicurve $v$ minus one.  Equivalently, $w(v)$ is the dimension of the simplex in $\sC(F)$ for which $v$ is a barycenter.

Let $X_k$ denote the full subcomplex of $\sC(F)^\circ$ composed of vertices with weights greater than or equal to $k$.  We will construct $\sC(F)^\circ$ by consecutively adding vertices of lower weight and show for each $k$ that $X_k$ is a wedge of spheres and has dimension $D$, where 
\[ D \leq \begin{cases} c+n-3 & n>0, \\ c-2 & n=0. \\ \end{cases}\]
Let $d$ denote the dimension of a top dimensional maximal simplex in $\sC(F)$.  Notice that $X_d$ is a discrete set of points and is thus a wedge of zero spheres and has dimension less than $D$.  Suppose by induction that $X_{k+1}$ has been shown to be a wedge of spheres all of dimensions less than $D$.  Suppose that $v = \{ a_0,\dots,a_k \} \in X_{k}-X_{k+1}$.  Assume that $v$ has a non-empty link.  Let $\{ F_i\}_{i=1}^t$ be the set of connected components of $F - \cup_j a_j$.  Notice that the link of $v$, denoted $L(v)$, in $X_{k+1}$ is simply the joins of $\sC(F_1),\dots, \sC(F_t)$.  Without loss of generality, suppose $F_1,\dots, F_\ell$ are orientable surfaces with $g_1,\dots,g_\ell$ handles and $r_1,\dots,r_\ell$ boundary components, respectively.  Suppose that $F_{\ell+1},\dots,F_t$ are non-orientable surfaces with $c_{\ell+1},\dots,c_t$ cross caps and $r_{\ell+1},\dots,r_t$ boundary components, respectively.  Via an Euler characteristic argument, we have
\begin{align*}
-D & = \sum_{i=1}^t \chi(F_i) 
\\ & = \left( \sum_{i=1}^\ell 2-2g_i-r_i \right) + \left( \sum_{i = \ell+1}^t 2-c_i -r_i \right)
\\ & = 2t - \left( \sum_{i=1}^\ell 2g_i+n_i \right) - \left( \sum_{i = \ell+1}^t c_i +r_i \right).
\end{align*}

By the work of Harer \cite{Harer_VirtualCohomDim}, $i\leq \ell$ implies that each $F_i$  is a wedge of spheres of dimensions less than $D$.  Inductively, we have that for $i > \ell$ each $F_i$ is a wedge of spheres of dimensions less than $D$.  Explicitly, we have that
\[ \dim(\sC(F_i) ) = 2g_i+r_i-3 <D \gap \mbox{for}\gap i \leq \ell \]
and
\[\dim(\sC(F_i)) \leq c_i+r_i-3 <D \gap \mbox{for} \gap i > \ell.\]  
Recalling that $L(v)$ is the joins of $\sC(F_1),\dots,\sC(F_t)$, we conclude that
\begin{align*} 
\dim(L(v)) & \leq \left( \sum_{i=1}^\ell 2g_i+r_i-3 \right) + \left( \sum_{i=\ell+1}^t c_i +r_i -3 \right) + (t-1) 
\\ & = \left( \sum_{i=1}^\ell 2g_i+r_i \right) + \left( \sum_{i=\ell+1}^t c_i + r_i \right) -3t+t-1 
\\ &= D-1.
\end{align*}
Notice that $X_k$ is obtained from $X_{k+1}$ by forming the join of $v$ with its link in $X_{k+1}$ for all $v$ in $X_{k}-X_{k+1}$.  

To complete the proof, we need to show that $X_k$ is not contractible.  Consider two vertices $v$ and $w$ in $X_k-X_{k+1}$.  Since we are working with the barycentric subdivision, we have that $v = \{ a_0,\dots, a_k \}$ and $w = \{ b_0,\dots,b_k \}$, where $a_i$ and $b_j$ are isotopy classes of essential simple closed curves in $F$.  

If the geometric intersection number $i(a_i,b_j) \neq 0$ for some $i$ and $j$, then there does not exist a vertex $x$ in $X_{k+1}$ such that $v,w \subset x$ as tuples of isotopy classes of essential simple closed curves.  It follows that $L(v) \cap L(w) = \varnothing$.  If $i(a_i,b_j) = 0$ for all $i$ and $j$, then let $x$ denote the vertex of minimal weight, $w(x) = m$, such that $v,w \subset x$ as tuples of isotopy classes of essential simple closed curves.  Notice that $L(v) \cap L(w)$ is the link of $x$ in $X_{m+1}$.  Inductively, $L(x)$ as a subspace of $X_{m+1}$ is a non-contractible wedge of spheres and thus forming the joins of $v$ and $w$ to their respective links suspends $L(x)$, producing a non-contractible space.

If such an $x$ does not exist then our space is a disjoint collection of cones and thus a wedge of zero spheres.  If such an $x$ exists, then it follows that $X_k$ is not contractible and is homotopy equivalent to a wedge of spheres each of dimension less than or equal to $D$.  This completes the proof.
\end{proof}

\begin{rem}\label{rem:HarerGap}
Harer's original proof of Lemma \ref{lem:CCHomDim} in the orientable case does not resolve the possibility that $\sC(F)$ may be contractible.  Our argument in the proof of Lemma \ref{lem:CCHomDim} provides a solution to this problem that is formulated completely in terms of the combinatorics of $\sC(F)$.
\end{rem}

Combining Theorem \ref{thm:CCConnectivity} and Lemma \ref{lem:CCHomDim}, we have the following result.

\begin{cor}\label{cor:CCHomotopyType}
Let $F$ be a surface with $g$ handles, $c$ cross caps, and $n$ marked points.  The curve complex of $F$ is homotopy equivalent to a wedge of spheres of dimension $d(g,c,n)$ where 
\[ d(g,c,n) = \begin{cases}
n-4 		& \mbox{if    }\,\,\,g = 0 = c, \\
2g+c-2	& \mbox{if    }\,\,\,n = 0, \\
2g+c+n-3	& \mbox{if    }\,\,\,n>0,g>0,c=0, \\
2g+c+n-3	& \mbox{if    }\,\,\,n>0,g=0, c>3.\\ \end{cases}\]
\end{cor}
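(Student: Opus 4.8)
The plan is to combine the connectivity bound of Theorem \ref{thm:CCConnectivity} with the homological dimension bound of Lemma \ref{lem:CCHomDim} by a squeezing argument, and then invoke the standard recognition principle for wedges of spheres. The first observation is purely numerical: writing $m = d(g,c,n)$ for the dimension appearing in the present Corollary (equivalently, in Lemma \ref{lem:CCHomDim}), one checks in each of the four cases that the connectivity of $\sC(F)$ given by Theorem \ref{thm:CCConnectivity} equals $m-1$. Indeed, in the planar case the connectivity is $n-3 = (n-4)+1$, in the case $n=0$ it is $2g+c-1 = (2g+c-2)+1$, and in the two remaining cases it is $2g+c+n-2 = (2g+c+n-3)+1$. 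Thus $\sC(F)$ is $(m-1)$-connected.

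Next I would extract the homology. Since $\sC(F)$ is $(m-1)$-connected, the Hurewicz theorem gives $\pi_i(\sC(F)) = 0$ for $i \leq m-1$ and $\tilde H_i(\sC(F);\IZ) = 0$ for $i \leq m-1$; in particular $\sC(F)$ is simply connected whenever $m \geq 2$. On the other hand, Lemma \ref{lem:CCHomDim} bounds the homological dimension by $m$, so $\tilde H_i(\sC(F);\IZ) = 0$ for $i > m$ as well. Hence the reduced homology of $\sC(F)$ is concentrated in the single degree $m$.

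The remaining step is to upgrade this homological information to a homotopy equivalence. Here I would use the fact, established in the course of proving Lemma \ref{lem:CCHomDim}, that $\sC(F)$ is homotopy equivalent to a wedge of spheres of dimensions at most $m$; this guarantees that $H_m(\sC(F);\IZ)$ is free abelian, so that no Moore-space torsion can obstruct the conclusion. Choosing a basis, representing each basis class by a map $S^m \to \sC(F)$ (possible by Hurewicz, as $\sC(F)$ is $(m-1)$-connected), and wedging these maps yields a map $\bigvee_\alpha S^m \to \sC(F)$ that is an isomorphism on $H_m$ and trivially so on all other reduced homology groups; since both spaces are simply connected, the homology Whitehead theorem promotes it to a homotopy equivalence. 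Equivalently, and more directly, the wedge-of-spheres model from Lemma \ref{lem:CCHomDim} has all dimensions $\leq m$, while $(m-1)$-connectivity forces every sphere in it to have dimension $\geq m$, so that all of them have dimension exactly $m$; this formulation also handles the low cases $m \leq 1$ uniformly, where a wedge of spheres of dimension $\leq m$ that is $(m-1)$-connected can contain only $m$-spheres. Finally, non-contractibility of $\sC(F)$ from Lemma \ref{lem:CCHomDim} ensures the wedge is non-empty, so the dimension is exactly $m$ as claimed.

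I expect the main obstacle to be the freeness of $H_m(\sC(F);\IZ)$: without it the squeeze only identifies $\sC(F)$ with a Moore space $M(H_m,m)$ rather than a wedge of spheres. I would address this by appealing to the explicit wedge-of-spheres decomposition produced in the proof of Lemma \ref{lem:CCHomDim}, from which freeness is immediate, rather than attempting to bound the geometric dimension of $\sC(F)$ directly --- the latter is genuinely larger than $m$ in general (the dimension of a maximal simplex corresponds to a pants-type decomposition and exceeds $m$ for $g \geq 2$), so freeness cannot be read off from the chain complex alone.
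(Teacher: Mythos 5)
Your proposal is correct, and it is essentially the paper's own proof: the paper derives the corollary in one line by combining Theorem \ref{thm:CCConnectivity} with Lemma \ref{lem:CCHomDim}, and your squeeze --- connectivity forces every sphere in the wedge model produced in the proof of Lemma \ref{lem:CCHomDim} to have dimension exactly $m$, while non-contractibility keeps the wedge nonempty --- is precisely that combination with the details supplied. Your ``more direct'' formulation is the cleanest way to run it, since it sidesteps the Whitehead/Moore-space discussion entirely.

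One slip in your first paragraph should be repaired, because it brushes against a genuine inconsistency in the quoted statements. You assert that Theorem \ref{thm:CCConnectivity} gives connectivity equal to $m-1$, but your own displayed arithmetic gives $m+1$: in the planar case $n-3 = (n-4)+1 = m+1$, and likewise in the other cases. Taken at face value, $(m+1)$-connectivity combined with homological dimension at most $m$ and the wedge structure would make $\sC(F)$ contractible, contradicting the non-contractibility assertion of Lemma \ref{lem:CCHomDim}. The resolution is that the connectivity values as printed in Theorem \ref{thm:CCConnectivity} are too strong by two: a nonempty wedge of $m$-spheres is exactly $(m-1)$-connected, and, for example, for the sphere with five marked points $\sC(F)$ is the Petersen graph, which is connected but not simply connected, whereas the printed value would assert $2$-connectivity. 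Your argument survives because it only ever uses $(m-1)$-connectivity, which follows a fortiori from the theorem as stated; but you should either write ``at least $m-1$'' or flag the discrepancy outright, since the equality you claim is contradicted by the computation you display immediately after it.
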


\subsection{The $\abc$-Complex}\label{subsect:CurveComplexes_abc}
To study the topology of the boundary of $\Teich_{g,2n}^\sigma(\varepsilon)$, we need to understand the intersection properties of simple closed curves, simple arcs that start and end in boundary components, and boundary components.  This intersection information is combinatorially encoded by the following complex.

\begin{defn}\label{defn:ABCComplex}
The \emph{$\abc$-complex} associated to a surface $F$, denoted $\abc(F)$, is the simplicial complex whose vertices are:
\begin{itemize}
\item the isotopy classes of simple closed curves in $F$ that are non-trivial and not M\"{o}bius curves;
\item the isotopy classes of essential simple arcs in $F$ relative to the boundary of $F$ such that each has end points in the boundary of $F$.
\end{itemize}
A set of $k+1$ vertices $\{v_0,\dots,v_k\}$ defines a $k$-simplex if the geometric intersection number $i(v_i,v_j)$ is zero for all $i$ and $j$.
\end{defn}

\begin{rem}\label{rem:ABCNomen}
The name $\abc$-complex was chosen because the $\abc$-complex encodes the intersection patterns of arcs ($\sA$), boundary components ($\sB$), and closed curves ($\sC$).
\end{rem}

\begin{rem}\label{rem:ABCIncludes} 
Notice that the $\abc$-complex includes some isotopy classes of nonessential simple closed curves.  Namely, the isotopy classes of the boundary components of $F$.  We also consider a subset of essential simple arcs in $F$.  Specifically, an isotopy class of an essential simple arc relative to the boundary of $F$, say $a$ in $F$, is a vertex in $\abc(F)$ if and only if for each representative $\alpha$ of $a$ we have $\alpha(0)$ and $\alpha(1)$ are in $\partial F$.  Hence, we do not consider arcs that have end points lying in the collection of marked points of $F$.  Observe that if $\partial F$ is empty, then $\abc(F) = \sC(F)$.
\end{rem} 

\begin{rem}\label{rem:MotivateABC}
To motivate Definition \ref{defn:ABCComplex}, consider a real algebraic curve $(X,\sigma)$.  Consider the underlying surface with involution, say $S$ and $\sigma$, and the quotient map $S \to S/\sigma$.  Each simplex in $\sC^\sigma(S)$\footnote{Recall that $\sC^\sigma(S)$ is the simplicial complex whose set of $k$-simplices consists of $(k+1)$-tuples of $\sigma$-orbits of isotopy classes of $\sigma$-invariant essential simple closed curves in $S$ that may be realized disjointly.}, which is a $\sigma$-orbit of $\sigma$-invariant curves, descends to a simplex of $\abc(S/\sigma)$ under the action of $\sigma$.  Conversely, any simplex of $\abc(S/\sigma)$ may be lifted to a simplex in $\sC^\sigma(S)$.  These operations define a simplicial isomorphism between $\sC^\sigma(S)$ and $\abc(S/\sigma)$.  Hence, $\abc(S/\sigma)$ combinatorially encodes the intersection properties of isotopy classes of $\sigma$-invariant essential simple closed curves in $S$.  
\end{rem}

\begin{rem}\label{rem:MobiusLiftingArg}
Remark \ref{rem:MotivateABC} shows why we do not consider M\"{o}bius curves in our curve complexes.  Since the orientation double cover of a M\"{o}bius band is an annulus, a representative of the isotopy class of a M\"{o}bius curve $\mu$ in $S/\sigma$ lifts to two disjoint, isotopic simple closed curves in $S$.  Hence, $\mu$ does not lift to a simplex in $\sC^\sigma(S)$ and consequently does not encode the desired combinatorial information.
\end{rem}

There is a nice relationship between the topology of the $\abc$-complex and the topology of the underlying curve complex that is given by the following proposition.

\begin{prop}\label{prop:ABCSuspensionThm}
Let $F$ be a surface with $r \geq 1$ boundary components.  If $\sC(F)$ is non-empty, then $\abc(F)$ is homotopy equivalent to the join of an $({r-1})$-sphere with $\sC(F)$.
\end{prop}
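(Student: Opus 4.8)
The plan is to isolate the $r$ boundary vertices of $\abc(F)$, analyze the star of the simplex they span, and reduce the computation to the contractibility of certain arc-and-curve complexes.

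First I would record the two geometric facts that drive everything. Write $b_1,\dots,b_r$ for the isotopy classes of the boundary components; these are vertices of $\abc(F)$ (Remark \ref{rem:ABCIncludes}). Each $b_i$ can be isotoped into a collar, so $i(b_i,b_j)=0$ and $i(b_i,c)=0$ for every vertex $c$ of $\sC(F)$; on the other hand an essential arc ending on $b_i$ must cross a curve parallel to $b_i$ in order to leave the collar, so $i(\alpha,b_i)>0$ for every such arc $\alpha$. Consequently the $b_i$ are pairwise disjoint and span a simplex $\tau=\langle b_1,\dots,b_r\rangle\cong\Delta^{r-1}$. Its link is exactly $\sC(F)$: every arc ends on the boundary and so is disjoint from no complete collection of boundary curves, while the only closed curves disjoint from all the $b_i$ are the essential ones. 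Hence $\overline{\mathrm{st}}(\tau)=\tau*\mathrm{lk}(\tau)=\Delta^{r-1}*\sC(F)$.

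Next I would run the standard star/deletion decomposition $\abc(F)=\overline{\mathrm{st}}(\tau)\cup\mathrm{del}(\tau)$, whose two pieces meet in $\overline{\mathrm{st}}(\tau)\cap\mathrm{del}(\tau)=\partial\tau*\mathrm{lk}(\tau)=S^{r-2}*\sC(F)$. The star $\Delta^{r-1}*\sC(F)$ is contractible, being the join of the nonempty simplex $\Delta^{r-1}$ with $\sC(F)$. Granting that $\mathrm{del}(\tau)$ is also contractible, the union is a homotopy pushout of two contractible complexes along $S^{r-2}*\sC(F)$, so $\abc(F)$ is the unreduced suspension of $S^{r-2}*\sC(F)$; since $\Sigma Y=S^{0}*Y$ and the join is associative, this gives $\abc(F)\simeq S^{0}*S^{r-2}*\sC(F)=S^{r-1}*\sC(F)$, as claimed. (For $r=1$ this reads $\partial\tau=\varnothing=S^{-1}$ and recovers $\Sigma\,\sC(F)=S^{0}*\sC(F)$.)

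The heart of the matter — and the step I expect to be the main obstacle — is therefore the contractibility of $D:=\mathrm{del}(\tau)$, the subcomplex of configurations omitting at least one boundary vertex. Writing $\Delta_{I}$ for the simplex on $\{b_i:i\in I\}$ and $N_{I}$ for the full subcomplex on essential curves together with arcs ending only on components outside $I$, one has $D=\bigcup_{I\subsetneq[r]}\Delta_{I}*N_{I}$. For each \emph{proper} $I$ at least one boundary component remains available, and $N_{I}$ is then an arc-and-curve complex of $F$ with arc endpoints restricted to the available components; a surgery flow in the style of Hatcher's proof that arc complexes are contractible — fix an arc ending on an available component and comb every arc and curve into compatibility with it — shows each such $N_{I}$ is contractible. (The excluded case $I=[r]$ gives $N_{[r]}=\sC(F)$, which is precisely why $\sC(F)$ survives in the link of $\tau$ but not in $D$.) I would then assemble the pieces by adjoining the cones $\Delta_{\{i\}}*N_{\{i\}}$ to the contractible core $N_{\varnothing}=\sA\sC(F)$ one boundary vertex at a time: each newly created intersection is again one of the contractible complexes $N_{I}$, any stray copy of $\sC(F)$ coming from two complementary cones being absorbed into a larger, contractible $N_{I}$, so an induction on $r$ via iterated homotopy pushouts of contractible complexes along contractible intersections yields $D\simeq\mathrm{pt}$. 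The delicate points are the Hatcher-type flow in the simultaneous presence of arcs and closed curves, and checking that the intersection pattern stays contractible at every stage of the induction.
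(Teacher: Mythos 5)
Your proposal is correct in substance but takes a genuinely different route from the paper, so let me compare. The paper filters $\abc(F)$ by boundary components: $\sC(F)=X_0\subset X_1\subset\cdots\subset X_r=\abc(F)$, where $X_k$ adds the vertex $b_k$ and all arcs ending on $\beta_1,\dots,\beta_k$, and proves $X_k\simeq \Sigma X_{k-1}$ by a star/deletion at the \emph{single} vertex $b_k$: the closed star is the cone $b_k * X_{k-1}$, and the deletion is pushed into the star of one fixed arc $v$ ending on $\beta_k$ by the Hatcher combing flow, so the two cones glue along $X_{k-1}$ to give a suspension; iterating $r$ times yields $S^{r-1}*\sC(F)$. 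You instead perform one global star/deletion at the full boundary simplex $\tau=\langle b_1,\dots,b_r\rangle$, which makes the join structure appear all at once (closed star $\Delta^{r-1}*\sC(F)$, intersection $\partial\tau*\sC(F)$), and you pay for this by having to contract $\mathrm{del}(\tau)$, which forces the decomposition into the pieces $\Delta_I*N_I$ over proper subsets $I$ and an induction, each piece contracted by its own Hatcher flow. Both proofs run on the same engine (the combing flow, with the same care needed when surgered curves or arcs become inessential); the paper's bookkeeping is leaner because each stage's intersection is simply $X_{k-1}$, whereas your version identifies the $(r-1)$-sphere factor concretely as coming from the simplex of boundary curves.

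One step of your assembly is under-specified, though fixable inside your framework. For $r\geq 3$, adjoining only the singleton cones $\Delta_{\{i\}}*N_{\{i\}}$ to $N_\varnothing$ does not exhaust $\mathrm{del}(\tau)$: a simplex consisting of $b_1$, $b_2$, and an arc ending on $\beta_3$ lies in $\Delta_{\{1,2\}}*N_{\{1,2\}}$ but in none of the singleton cones. The fix is to adjoin $\Delta_I*N_I$ over \emph{all} proper $I$, in order of increasing cardinality. One computes $(\Delta_I*N_I)\cap(\Delta_J*N_J)=\Delta_{I\cap J}*N_{I\cup J}$, and with that ordering the intersection of the new piece $\Delta_I*N_I$ with everything previously attached is exactly $\partial\Delta_I*N_I$ (for $|I|=1$ read this as $N_I$). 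This is your ``absorption'' observation in general form: the stray copies $\sC(F)=N_{[r]}$ arising from complementary pairs sit inside $N_I$, hence inside $\partial\Delta_I*N_I$. Since each $N_I$ with $I$ proper is contractible by its Hatcher flow, and a join of a sphere with a nonempty contractible complex is contractible, every stage glues contractibles along a nonempty contractible intersection, and the induction closes; nonemptiness of these intersections is where your hypothesis $\sC(F)\neq\varnothing$ enters, exactly as it does in the paper.
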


The proof of Proposition \ref{prop:ABCSuspensionThm} uses a technique that was developed by Hatcher \cite{Hatcher_Triangulations} to show that the arc complex of a surface is contractible.

\begin{proof}
Observe that $\abc(F)$ has a natural filtration.  List the boundary components of $F$ as $\beta_1,\dots,\beta_r$.  Denote their respective isotopy classes by $b_1,\dots,b_r$.  Define $X_{k}$ to be the full subcomplex of $\abc(F)$ spanned by the vertices of $\sC(F)$, $\{b_1,\dots,b_k\}$, and all isotopy classes of essential arcs in $\abc(F)$ whose representatives have boundaries in $\cup_{i=1}^k \beta_i$.  It follows that 
\[\sC(F) = X_0 \subset X_1 \subset \cdots \subset X_{r-1} \subset X_r = \abc(F).\]
We claim that $X_k$ is homotopy equivalent to the suspension of $X_{k-1}$.  After proving this claim, the result follows.

Fix some vertex $v$ in $X_{k}- X_{k-1}$ with representative $\alpha$ such that $\alpha$ is an essential simple arc in $F$ with $\alpha(0)$ and $\alpha(1)$ lying in $\beta_{k}$.  Let $p$ be an arbitrary point in a simplex of $X_k$ spanned by vertices $v_0,\dots,v_m$ such that $v_i \neq \beta_k$ for all $i$.  Equivalently, $p$ is not in the simplicial star of $\beta_k$.  Express $p$ in barycentric coordinates, that is, write $p = \sum_i c_iv_i$ where $\sum_i c_i = 1$ and $c_i > 0$ for all $i$.  We will define a flow from $p$ into the simplicial star of $v$, denoted $St(v)$.  Given that our choice of $p$ is arbitrary, we may define similar flows for all points in $X_{k} - St(\beta_k)$.  Given that our flow will be the identity on $St(\beta_k)$ and that the closure of $St(\beta_k)$ is the cone on $X_{k-1}$, we see that the image of the deformation retraction is a suspension of $X_{k-1}$ with suspension points $v$ and $\beta_k$.

We realize each $v_i$ on $F$ by one of its representative curves $\alpha_i$.  We realize $p$ on $F$ by thickening each curve $\alpha_i$ to a width of $c_i$, forming an $\alpha_i$-band.  Via isotopies, we make the union of the $\alpha_i$-bands to be a closed interval that is disjoint from $\beta_k$.  In other words, we rearrange the $\alpha_i$-bands along $\alpha$ to look like the left-hand side of Figure \ref{fig:HatcherFlowF}. 

\begin{figure}[h]
   \centering
   \includegraphics[width=4in]{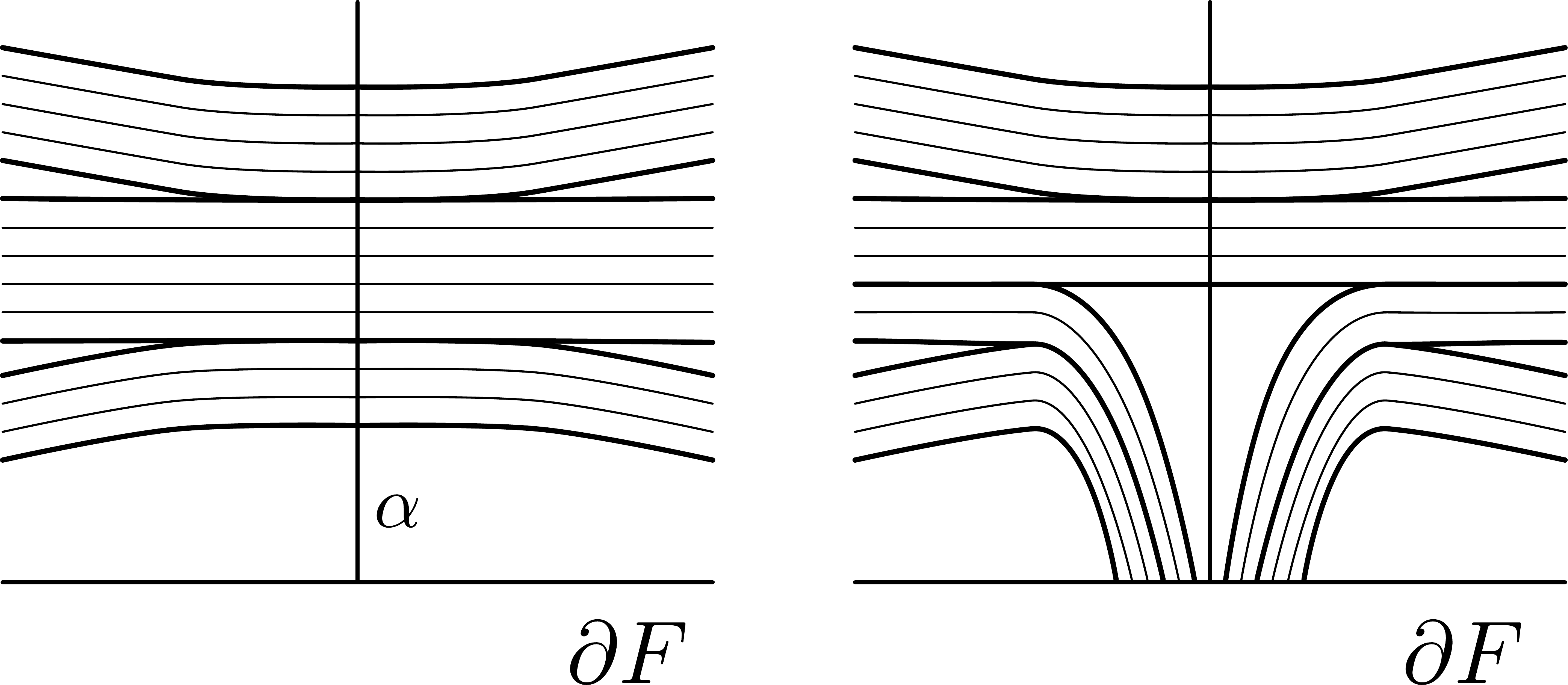} 
   \caption{Realized flow of $p$ into the simplicial star of $v$.}
   \label{fig:HatcherFlowF}
\end{figure}

The thickness of this union of $\alpha_i$-bands is $\omega = \sum_i c_i i(\alpha_i,\alpha)$.  We define a flow for the point $p$ by considering the value $\omega t$.  Where at $t=t_0$, we push the union of the $\alpha_i$-bands $\omega t_0$ along a fixed direction that follows the image of $\alpha$ on $F$.  At $t=0$, the realized point in the simplex of $X_k- St(\beta_k)$ is $p$.  At $0<t<1$, the arrangement of the $\alpha_i$-bands resembles the right-hand side of Figure \ref{fig:HatcherFlowF} and is now given by a new collection of bands.  Each band corresponds to an isotopy class of curves and the thickness of the band gives the desired barycentric coordinate.  Note, if $\alpha_i$ is essential, then at least one of the two new curves obtained from the $\alpha_i$-band is essential.  If one of these curves is nonessential, we send the coordinates from this curve to zero.  This allows for the flow to be well-defined.  At $t=1$, the realized point $p'$ is contained in a simplex composed of arcs disjoint from $\alpha$ and thus $p'$ is in the simplicial star of $v$.  This flow is well-defined on intersections of simplices and is continuous on simplices.  In fact, it is linear on simplices.  Thus, it defines a deformation retraction of $X_k- St(\beta_k)$ onto the simplicial star of $v$ and gives the desired suspension of $X_{k-1}$.
\end{proof}

Combining Corollary \ref{cor:CCHomotopyType} and Proposition \ref{prop:ABCSuspensionThm} gives the homotopy type of the $\abc$-complex of a surface.

\begin{thm}\label{thm:ABCHomotopyType}
Let $F$ be a surface with $g$ handles, $c$ cross caps, $r\geq 1$ boundary  components, and $n$ marked points.  The complex $\abc(F)$ is homotopy equivalent to a wedge of spheres of dimension $d(g,c,r,n)$ where
\[d(g,c,r,n) = \begin{cases}
2r+n-4		& \mbox{if    }\,\,\, g = 0 = c,\\
2g+c+2r+n-3	& \mbox{if    }\,\,\, g>0,c=0,\\
2g+c+2r+n-3	& \mbox{if    }\,\,\, g=0,c>3.\\
\end{cases}\]
\end{thm}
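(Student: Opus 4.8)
The plan is to assemble the two preceding results, Proposition~\ref{prop:ABCSuspensionThm} and Corollary~\ref{cor:CCHomotopyType}, using the elementary homotopy theory of joins and suspensions. First I would invoke Proposition~\ref{prop:ABCSuspensionThm}: since $r \geq 1$ and, in each of the stated ranges, $\sC(F)$ is non-empty, this yields a homotopy equivalence $\abc(F) \simeq S^{r-1} * \sC(F)$, where $*$ denotes the join. This reduces the theorem to identifying the homotopy type of $\sC(F)$ and then computing the effect of joining with an $(r-1)$-sphere.

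Next I would determine the homotopy type of $\sC(F)$ via Corollary~\ref{cor:CCHomotopyType}. The one subtlety is that $F$ carries both $r$ boundary components and $n$ marked points, whereas the corollary is phrased for surfaces with marked points only. Since an essential simple closed curve is by definition one that is not isotopic into a regular neighborhood of a point, puncture, or boundary component, boundary components and marked points play symmetric roles in the definition of $\sC(F)$; capping each boundary circle with a once-marked disk produces a simplicial isomorphism of curve complexes. Hence $\sC(F)$ has the homotopy type given by Corollary~\ref{cor:CCHomotopyType} applied to a surface with $g$ handles, $c$ cross caps, and $r+n$ marked points, namely a wedge of spheres of a single dimension $D = d(g,c,r+n)$ (note $r+n \geq 1$, so the degenerate $n=0$ clause of the corollary never arises).

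The core computation is the behavior of the join. I would use that $S^{r-1} * (-)$ is, up to homotopy, the $r$-fold suspension $\Sigma^{r}(-)$, together with the fact that reduced suspension commutes with wedges for well-pointed CW complexes. Concretely, using $S^{a} * S^{b} \cong S^{a+b+1}$ and distributivity of the join over wedges up to homotopy,
\[ S^{r-1} * \sC(F) \simeq S^{r-1} * \bigvee_\alpha S^{D} \simeq \bigvee_\alpha \left( S^{r-1} * S^{D} \right) \simeq \bigvee_\alpha S^{r+D}. \]
Thus $\abc(F)$ is homotopy equivalent to a wedge of $(r+D)$-spheres, and it remains to check that $r+D$ matches the table in each case. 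When $g=c=0$ we have $D=(r+n)-4$, so $r+D = 2r+n-4$; when $g>0$ and $c=0$ we have $D = 2g+(r+n)-3$, so $r+D = 2g+2r+n-3$; and when $g=0$ and $c>3$ we have $D = c+(r+n)-3$, so $r+D = c+2r+n-3 = 2g+c+2r+n-3$. In every case the exponent agrees with $d(g,c,r,n)$.

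The genuine mathematical content of the theorem lives in the two results being combined, so the work here is mostly bookkeeping, and I do not expect a serious obstacle. The two points requiring care are: confirming that $\sC(F)$ is non-empty throughout the stated ranges, so that the hypothesis of Proposition~\ref{prop:ABCSuspensionThm} is met and no degenerate case (such as a contractible arc complex) slips in; and justifying the identification of boundary components with marked points in Corollary~\ref{cor:CCHomotopyType}. The latter is the step I would write most carefully, since it is precisely what silently converts the $r$ boundary components into the $r+n$ punctures whose count drives the dimension formula.
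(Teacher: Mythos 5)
Your proposal is correct and follows essentially the same route as the paper, whose entire proof is the single sentence ``Combining Corollary~\ref{cor:CCHomotopyType} and Proposition~\ref{prop:ABCSuspensionThm} gives the homotopy type of the $\abc$-complex'' --- your join computation $S^{r-1} * \bigvee_\alpha S^{D} \simeq \bigvee_\alpha S^{r+D}$ and the case-by-case arithmetic are exactly the content of that combination. The two points you flag (capping boundary circles with once-marked disks to apply Corollary~\ref{cor:CCHomotopyType}, and non-emptiness of $\sC(F)$ so that Proposition~\ref{prop:ABCSuspensionThm} applies) are left implicit in the paper, so your writeup is if anything more careful than the original.
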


\section{Virtual Cohomological Dimension of Mapping Class Groups}\label{sect:VCD}

In this section, we study a group cohomological property of mapping class groups of surfaces with orientation reversing involutions.  We will show that the homology and cohomology of $\Mod_{g,2n}^\sigma$ satisfy duality relations analogous to those for compact manifolds.  Specifically, we will show that $\Mod_{g,2n}^\sigma$ is a virtual duality group.

\subsection{Virtual Duality Groups}\label{subsect:VCD_VDG}
The notion of a duality group that we consider was introduced by Bieri and Eckmann \cite{BieriEckmann_GroupHomDuality}.

\begin{defn}\label{defn:DualityGroup}
A group $G$ is called a \emph{duality group} of dimension $m$ if there is a (right) $G$-module $C$ (called a \emph{dualizing module}) and an element $e \in H_m(G;C)$ (called a \emph{fundamental class}) such that the cap-product with $e$ determines an isomorphism
\[ (e \frown -): H^k(G;A) \to H_{m-k}(G;C\otimes A)\]
for every (left) $G$-module $A$ and for all $k$.
\end{defn}

We will be particularly interested in virtual duality groups.

\begin{defn}\label{defn:VirtualDualityGroup}
A group $G$ is called a \emph{virtual duality group} of dimension $m$ if there exists a finite index subgroup $H$ of $G$ that is a duality group of dimension $m$.
\end{defn}

We will apply the following theorem to $\Mod_{g,2n}^\sigma$ to prove Theorem \ref{thm:InvolutionDualityGroup}.

\begin{thm}[Bieri, Eckmann \cite{BieriEckmann_GroupHomDuality}]\label{thm:GroupHomDuality}
Let $G$ be a group and $H \trianglelefteq G$ be a finite index subgroup admitting an Eilenberg-MacLane complex $X$, which is a compact, connected, orientable manifold with boundary of dimension $m$.  Let $\tilde{X}$ be the universal cover of $X$.  If the (reduced) integral homology groups $H_i(\partial \tilde{X})$ are equal to 0 for $i \neq q$ and if $H_q(\partial \tilde{X})$ is torsion-free, then $G$ is a virtual duality group of dimension $m-q-1$.
\end{thm}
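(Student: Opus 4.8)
The plan is to prove the sharper statement that the finite-index subgroup $H$ is itself a genuine duality group of dimension $m-q-1$; since $H$ is finite-index in $G$, Definition \ref{defn:VirtualDualityGroup} then immediately yields that $G$ is a virtual duality group of the same dimension. To establish that $H$ is a duality group I would invoke the homological characterization of duality groups, also due to Bieri and Eckmann: a group $H$ of type $FP$ is a duality group of dimension $d$ if and only if $H^k(H;\IZ H)$ vanishes for all $k \neq d$ and $H^d(H;\IZ H)$ is torsion-free as an abelian group, in which case the dualizing module is $C = H^d(H;\IZ H)$. Thus the entire problem reduces to computing the group-ring cohomology $H^*(H;\IZ H)$ and checking that it is concentrated in degree $m-q-1$ and torsion-free there.

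First I would record that the hypotheses make $H$ a group of type $FP$: the compact manifold $X$ is a finite $K(H,1)$, so its cellular chain complex is a finite free resolution of $\IZ$ over $\IZ H$. Because the $H$-action on the universal cover $\tilde{X}$ is free, proper, and cocompact, dualizing this resolution identifies group-ring cohomology with the compactly supported cohomology of the universal cover,
\[ H^k(H;\IZ H) \;\cong\; H^k_c(\tilde{X};\IZ), \]
the point being that $\Hom_{\IZ H}(C_*(\tilde{X}),\IZ H)$ is the complex of finitely supported cellular cochains on $\tilde{X}$. Since orientability lifts to covers, $\tilde{X}$ is an oriented $m$-manifold with boundary $\partial\tilde{X}$, so the compactly supported form of Poincar\'{e}--Lefschetz duality gives
\[ H^k_c(\tilde{X};\IZ) \;\cong\; H_{m-k}(\tilde{X},\partial\tilde{X};\IZ). \]
Finally I would use that $\tilde{X}$ is contractible, being the universal cover of the aspherical $X$: the long exact sequence of the pair $(\tilde{X},\partial\tilde{X})$ then collapses to isomorphisms $H_{j}(\tilde{X},\partial\tilde{X}) \cong \tilde{H}_{j-1}(\partial\tilde{X})$ for every $j \geq 1$. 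Stringing the three displayed identifications together yields $H^k(H;\IZ H) \cong \tilde{H}_{m-k-1}(\partial\tilde{X})$ for $k \leq m-1$, while the top case $k=m$ vanishes because $H_0(\tilde{X},\partial\tilde{X})=0$ when $\partial\tilde{X}\neq\varnothing$. By hypothesis $\tilde{H}_i(\partial\tilde{X})$ is zero except when $i=q$, where it is torsion-free; hence $H^k(H;\IZ H)$ is zero except when $m-k-1=q$, i.e.\ $k=m-q-1$, where it equals the torsion-free group $H_q(\partial\tilde{X})$. Applying the characterization above completes the proof.

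I expect the main obstacle to be the careful bookkeeping of the two comparison isomorphisms rather than any single hard idea. Specifically, the identification $H^*(H;\IZ H)\cong H^*_c(\tilde{X})$ must be justified from the freeness and cocompactness of the action together with the fact that dualizing a finitely generated free $\IZ H$-module returns finitely supported cochains; and the compactly supported version of Poincar\'{e}--Lefschetz duality must be invoked in the correct variance for the non-compact manifold $\tilde{X}$, with attention to the fact that the orientation, though defined on the compact quotient $X$, pulls back consistently to $\tilde{X}$. The degenerate boundary cases, in particular ensuring $\partial\tilde{X}$ is nonempty so that the dimension shift by $q+1$ is meaningful rather than the empty-boundary Poincar\'{e}-duality-group situation, should also be checked, but these are routine once the two structural isomorphisms are in place.
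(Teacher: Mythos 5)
The paper does not actually prove this statement---it is imported verbatim with a citation to Bieri--Eckmann and then applied---so there is no internal proof to compare against; your argument is, in substance, the original Bieri--Eckmann proof, and it is correct. The chain of identifications $H^k(H;\IZ H)\cong H^k_c(\tilde{X};\IZ)\cong H_{m-k}(\tilde{X},\partial\tilde{X};\IZ)\cong \tilde{H}_{m-k-1}(\partial\tilde{X};\IZ)$ is exactly right (type $FP$ from the finite free $\IZ H$-resolution $C_*(\tilde{X})$, finitely supported cochains from cocompactness, Lefschetz duality in its compactly supported form, and the collapsed long exact sequence of the pair with the reduced-homology shift at $j=1$ and the vanishing at $k=m$ handled correctly), and the one degenerate case you'd need to police---empty boundary, where under the convention $\tilde{H}_{-1}(\varnothing)=\IZ$ one has $q=-1$ and lands in the Poincar\'{e} duality situation of dimension $m$---you already flagged explicitly.
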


\subsection{Proof of Theorem A}
Let $\Mod_{g,n}^\sigma$ be the group $G$ defined in Theorem \ref{thm:GroupHomDuality}.  It is well-known that there exists $\Gamma' \trianglelefteq \Mod_{g,2n}^\sigma$ that is a finite index subgroup of $\Mod_{g,n}^\sigma$ that acts freely on $\Teich_{g,2n}^\sigma(\varepsilon)$.  Let ${\Teich}_{g,2n}^\sigma(\varepsilon)$ be the space $X$ in Theorem \ref{thm:GroupHomDuality}.  We need the following lemma.

\begin{lem}\label{lem:PropOfThick}
Let $S$ be surface of genus $g$ with $2n$ marked points and an orientation reversing involution $\sigma$.  The space $\Teich_{g,2n}^\sigma(\varepsilon)$ has the following properties:
\begin{enumerate}
\item $\Gamma'$ acts properly discontinuously on $\Teich_{g,2n}^\sigma(\varepsilon)$.
\item $\Gamma'$ acts virtually freely on $\Teich_{g,2n}^\sigma(\varepsilon)$.
\item $\Gamma' \backslash \Teich_{g,2n}^\sigma(\varepsilon)$ is a compact, connected, orientable manifold of dimension $3g-3+2n$.
\item $\Teich_{g,2n}^\sigma(\varepsilon)$ is contractible.
\item $\partial \Teich_{g,2n}^\sigma(\varepsilon)$ is homotopy equivalent to $\abc(S/\sigma)$.
\end{enumerate}
\end{lem}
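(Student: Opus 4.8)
The plan is to read off (1)--(4) from results already established and to devote the real work to (5). Parts (1) and (2) are immediate: in Section~\ref{sect:TeichTheory} we saw that $\Mod_{g,2n}^\sigma$ acts properly discontinuously and virtually freely on $\Teich_{g,2n}^\sigma$ and that $\Teich_{g,2n}^\sigma(\varepsilon)$ is an invariant subspace, so restricting the action of the finite-index, torsion-free subgroup $\Gamma'$ gives both statements. For (3), the quotient $\Gamma'\backslash\Teich_{g,2n}^\sigma(\varepsilon)$ is a finite cover of the compact space $\sM_{g,2n}^\sigma(\varepsilon)$ of Proposition~\ref{prop:MumfordCompactness}, hence compact; it is connected and of dimension $3g-3+2n$ because $\Teich_{g,2n}^\sigma\cong\IR^{3g-3+2n}$ via $\sigma$-invariant Fenchel--Nielsen coordinates and $\Teich_{g,2n}^\sigma(\varepsilon)$ is a codimension-zero submanifold-with-corners; and it is orientable since $\Teich_{g,2n}^\sigma(\varepsilon)$ is contractible, hence orientable, and $\Gamma'$ acts freely, after passing to an index-at-most-two subgroup if needed to make the action orientation preserving (harmless for the virtual duality conclusion). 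Part (4) is precisely Remark~\ref{rem:TeichSigmaContract}.

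For (5), I first invoke Remark~\ref{rem:MotivateABC}, which supplies a simplicial isomorphism $\abc(S/\sigma)\cong\sC^\sigma(S)$; it therefore suffices to prove $\partial\Teich_{g,2n}^\sigma(\varepsilon)\simeq\sC^\sigma(S)$. The approach is a nerve-lemma argument. On $\Teich_{g,2n}^\sigma$ the length spectrum is $\Psi_\sigma$-invariant, and any geodesic of length $<\delta$ is automatically $\sigma$-invariant, since it is then disjoint from its $\sigma$-image by Proposition~\ref{prop:ShortGeodesics}; by Corollary~\ref{cor:ShortSimpsCC} a family of such short curves spans a simplex of $\sC^\sigma(S)$. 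Thus, for each vertex $a$ of $\sC^\sigma(S)$ the closed set $F_a=\{\mathscr{X}\in\Teich_{g,2n}^\sigma(\varepsilon):\ell_{\mathscr{X}}(a)=\varepsilon\}$ is well defined, the $F_a$ cover $\partial\Teich_{g,2n}^\sigma(\varepsilon)$, and $F_{a_0}\cap\cdots\cap F_{a_k}$ is nonempty precisely when $a_0,\dots,a_k$ can be realized disjointly---necessity from Proposition~\ref{prop:ShortGeodesics}, sufficiency from a direct Fenchel--Nielsen construction. Hence the nerve of $\{F_a\}$ is exactly $\sC^\sigma(S)$, and after a standard open-thickening of this closed cover the nerve lemma will yield the desired homotopy equivalence, provided every nonempty intersection $F_{a_0}\cap\cdots\cap F_{a_k}$ is contractible.

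I expect this last contractibility to be the main obstacle. The plan is to extend $a_0,\dots,a_k$ to a $\sigma$-invariant pants decomposition and use the induced $\sigma$-invariant Fenchel--Nielsen coordinates, in which the $\Psi_\sigma$-fixed locus is cut out by linear relations. On $F_{a_0}\cap\cdots\cap F_{a_k}$ the lengths of the $a_i$ are pinned to $\varepsilon$ while the remaining length and twist parameters vary subject to $\ell_{\mathscr{X}}(b)\ge\varepsilon$ for all $b$. The key point is that, since $\varepsilon<\delta$ is small, the collar lemma forces every curve crossing some $a_i$ to be long automatically, so the thickness constraint only restricts the geometry of the pieces obtained by cutting $S$ along $a_0,\dots,a_k$, while the twists around the $a_i$ contribute free $\IR$-factors. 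This exhibits $F_{a_0}\cap\cdots\cap F_{a_k}$ as a $\sigma$-equivariant version of the thick part of the Teichm\"uller space of the cut surface times a Euclidean twist space, both of which are contractible by the Ji--Wolpert retraction argument underlying Remark~\ref{rem:TeichSigmaContract}. Making the collar estimate and the descent to $S/\sigma$ fully $\sigma$-equivariant---so that M\"obius curves play no role, cf. Remark~\ref{rem:MobiusLiftingArg}---is where the bulk of the care will be required.
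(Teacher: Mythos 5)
Your proposal is correct and takes essentially the same route as the paper: parts (1)--(4) are read off from Proposition~\ref{prop:MumfordCompactness}, Remark~\ref{rem:TeichSigmaContract}, and the index-two-subgroup trick for orientability, while part (5) is the same nerve argument the paper gives --- closed sets cut out by $\ell_{\mathscr{X}}(a)=\varepsilon$, nerve identified with $\sC^\sigma(S)\cong\abc(S/\sigma)$ via Remark~\ref{rem:MotivateABC}, and contractibility of intersections via Fenchel--Nielsen coordinates, splitting off the twist parameters as an $\IR^k$-factor and reducing to the truncated $\Psi_\sigma$-fixed locus of the Teichm\"uller space of the cut surface, which is contractible by the Ji--Wolpert retraction. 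The only differences are cosmetic: you index the cover by vertices rather than by simplices, and you spell out two points the paper leaves implicit (that sufficiently short geodesics are automatically $\sigma$-invariant, and that the closed cover needs an open thickening before the nerve lemma applies).
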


\begin{proof}
Properties (1) and (2) are obvious.  The statement of compactness in property (3) follows via the same argument used in the proof of Proposition \ref{prop:MumfordCompactness}.  The statement of orientability in property (3) follows by replacing $\Gamma'$ with an index two subgroup if necessary.  Property (4) was noted in Remark \ref{rem:TeichSigmaContract}.  It remains to prove property (5).

The proof of property (5) reduces to showing that $\abc(S/\sigma)$ is homotopy equivalent to a nerve of a covering $\{C_\tau\}_{\tau \in \sC^\sigma(S)}$ of $\partial \Teich_{g,2n}^\sigma(\varepsilon)$ by closed sets $C_\tau$.  To this end, set
\[C_\tau = \{\mathscr{X} \in\Teich_{g,2n}^\sigma(\varepsilon) \mid \ell_{\mathscr{X}}(a) = \varepsilon \,\,\,\, \mbox{for} \,\,\,\, a \in \tau \in \sC^\sigma(S)\}\]
where $\tau$ is a simplex in $\sC^\sigma(S)$ and $a$ is a vertex of $\tau$.  Using Fenchel--Neilsen coordinates, it is clear that
\[\partial \Teich_{g,2n}^\sigma(\varepsilon) = \bigcup_{\tau \in \sC^\sigma(S)} C_\tau.\]  
Notice that if $\cap_{a \in \tau} C_a \neq \varnothing$, then $\tau$ must give a simplex in $\sC^\sigma(S)$.  By Remark \ref{rem:MotivateABC}, we know that the homotopy type of $\sC^\sigma(S)$ is the same as the homotopy type of $\abc(S/\sigma)$.  To obtain the desired result, we simply need to show that $C_\tau \cap C_{\tau'}$ is contractible (if non-empty) for all $\tau$ and $\tau'$ in $\sC^\sigma(S)$.  Consider
\[ T = \{ \mathscr{X} \in \Teich_{g,2n}^\sigma \mid \ell_{\mathscr{X}}(\alpha) = \varepsilon = \ell_{\mathscr{X}}(\beta) \,\,\,\, \mbox{for all} \,\,\,\, \alpha \in \tau,\, \beta \in \tau' \}.\]
Let $\IR^k$ correspond to the twist parameters of all curves in $\tau$ and $\tau'$.  Since we have selected $\varepsilon <\delta$ (recall Proposition \ref{prop:ShortGeodesics}), we can use Fenchel--Nielsen coordinates to identify $T/\IR^k$ with the fixed locus of the involution $\Psi_\sigma$ in the Teichm\"{u}ller space of $F$, where $F$ is the complement of $S$ by the curves in $\tau$ and $\tau'$.  We must have that $(C_\tau \cap C_{\tau'})/\IR^k$ is given by the truncation of the fixed locus of the involution $\Psi_\sigma$ in the Teichm\"{u}ller space of $F$ for some $\varepsilon<\delta$.  Consequently, $(C_\tau \cap C_{\tau'})/\IR^k$ is contractible.  It follows that $C_\tau \cap C_{\tau'}$ is contradictable and the desired result follows.
\end{proof}

Theorem \ref{thm:InvolutionDualityGroup} immediately follows from Theorem \ref{thm:ABCHomotopyType}, Theorem \ref{thm:GroupHomDuality}, and Lemma \ref{lem:PropOfThick}.

\section{Orbifold Homotopy Groups of Moduli Spaces}\label{sect:OrbiHomotopyGroups}
In this section, we show that the $m$th orbifold homotopy groups of $\sM_{g,2n}^\sigma$ relative to $\sU_{g,2n}^\sigma$ vanish for all $m$ less than or equal to a computable integer which is a topological invariant of $(S,\sigma)$.

\subsection{Orbifold Homotopy Groups}
Let $X$ be a topological space and let $\Gamma$ be a group that acts properly discontinuously (but not necessarily freely) on $X$.  Using the Borel construction, let
\[p: E\Gamma \times X \to E \Gamma \times_{\Gamma} X \] 
denote the canonical covering space obtained from the (free) diagonal action of $\Gamma$ on $E \Gamma \times X$.  

\begin{defn}
The \emph{$m$th orbifold homotopy group of the space $X/\Gamma$}, denoted $\pi_n^{orb}(X/\Gamma)$, is defined as
\[ \pi_m^{orb}(X/\Gamma) := \pi_m(E\Gamma \times_\Gamma X).\]
\end{defn}

We will be interested in the case of $X = {\Teich}_{g,2n}^\sigma$ and $\Gamma = \Mod_{g,2n}^\sigma$.

\subsection{Proof of Theorem B}

To prove Theorem \ref{thm:OrbiHomotopyGroups}, we will need the following lemma.

\begin{lem}\label{lem:ConnInf}
Let $S$ be a surface of genus $g$ with $2n$ marked points and an orientation reversing involution $\sigma$.  Suppose that $2-2g-2n<0$ and $\sC^\sigma(S)$ is connected.  If $\mathscr{X}$ and $\mathscr{Y}$ are in $\sV_{g,2n}^\sigma$, then there exists a path in $\sV_{g,2n}$ from $\mathscr{X}$ to $\mathscr{Y}$.
\end{lem}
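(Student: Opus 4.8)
The plan is to use the connectivity of $\sC^\sigma(S)$ to connect $\mathscr{X}$ and $\mathscr{Y}$ within the ``thin part'' $\sV_{g,2n}^\sigma$. The key geometric idea is that a hyperbolic structure lies in $\sV_{g,2n}^\sigma = \Teich_{g,2n}^\sigma - \Teich_{g,2n}^\sigma(\varepsilon)$ precisely when it has at least one $\sigma$-invariant simple closed curve of length less than $\varepsilon$, i.e., when it ``sees'' at least one vertex of $\sC^\sigma(S)$. So to each point of $\sV_{g,2n}^\sigma$ I would associate the (nonempty) set of vertices of $\sC^\sigma(S)$ that are short, and try to move along paths that keep some short curve short throughout.

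First I would set up the local picture using Fenchel--Nielsen coordinates. Fix $\mathscr{X} \in \sV_{g,2n}^\sigma$ and let $a$ be a $\sigma$-orbit of $\sigma$-invariant curves (a vertex of $\sC^\sigma(S)$) with $\ell_{\mathscr{X}}(a) < \varepsilon$. Extend $a$ to a $\sigma$-invariant pants decomposition and consider the subspace of $\Teich_{g,2n}^\sigma$ on which the length of $a$ is pinned below $\varepsilon$: by shrinking the length coordinate of $a$ while holding everything else fixed, I can flow $\mathscr{X}$ to a ``standard'' structure where $a$ is very short, and this whole flow stays in $\sV_{g,2n}^\sigma$ since $a$ remains short. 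Thus every point of $\sV_{g,2n}^\sigma$ can be connected within $\sV_{g,2n}^\sigma$ to a structure in which a chosen short curve is pinched, and the set of structures in $\Teich_{g,2n}^\sigma$ with $\ell(a)$ bounded above is connected (it is convex in the remaining Fenchel--Nielsen coordinates once the $a$-length coordinate is constrained to an interval). This shows: the locus $U_a := \{\mathscr{X} \in \sV_{g,2n}^\sigma \mid \ell_{\mathscr{X}}(a) < \varepsilon\}$ is path-connected for each vertex $a$.

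Next I would use adjacency in $\sC^\sigma(S)$ to glue these pieces. If $a$ and $a'$ are vertices spanning an edge of $\sC^\sigma(S)$, then $a$ and $a'$ can be realized disjointly, so they lie in a common $\sigma$-invariant pants decomposition; hence there is a structure with both $\ell(a)$ and $\ell(a')$ less than $\varepsilon$, giving $U_a \cap U_{a'} \neq \varnothing$. Since $\sC^\sigma(S)$ is connected by hypothesis, any two vertices are joined by an edge-path $a = a_0, a_1, \dots, a_m = a'$, and the corresponding chain $U_{a_0}, U_{a_1}, \dots, U_{a_m}$ has consecutive nonempty overlaps; concatenating paths through these overlaps produces a path in $\sV_{g,2n}^\sigma$ from any point of $U_a$ to any point of $U_{a'}$. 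Given arbitrary $\mathscr{X}, \mathscr{Y} \in \sV_{g,2n}^\sigma$, I choose short curves $a$ for $\mathscr{X}$ and $a'$ for $\mathscr{Y}$, connect $\mathscr{X}$ to $U_a$, traverse the chain to $U_{a'}$, and connect to $\mathscr{Y}$, completing the argument.

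The main obstacle I anticipate is making the Fenchel--Nielsen flow genuinely $\sigma$-equivariant, so that it stays inside the fixed locus $\Teich_{g,2n}^\sigma$ rather than merely inside $\Teich_{g,2n}$. This requires choosing the pants decomposition to be $\sigma$-invariant (which Theorem \ref{thm:BersConstant} guarantees) and verifying that pinching a $\sigma$-orbit of curves and adjusting the associated length and twist coordinates preserves the condition of being fixed by $\Psi_\sigma$. The key point, already used in Remark \ref{rem:TeichSigmaContract} and in the proof of Lemma \ref{lem:PropOfThick}, is that the $\sigma$-invariant Fenchel--Nielsen coordinates cut out $\Teich_{g,2n}^\sigma$ as an affine subspace, so moving within those coordinates keeps us in $\Teich_{g,2n}^\sigma$; I would invoke this to ensure each elementary move and each overlap is realized by an actual $\sigma$-invariant hyperbolic structure.
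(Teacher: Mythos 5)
Your proposal is correct and takes essentially the same route as the paper's proof: both use the connectivity of $\sC^\sigma(S)$ together with Fenchel--Nielsen coordinates adapted to $\sigma$-invariant pants decompositions so that one short curve stays short while the next one is made short. The paper realizes this as explicit coordinate paths (shrinking one length coordinate at a time along the edge path, then a straight-line path at the end), and your packaging via the path-connected sets $U_a$ with nonempty consecutive overlaps is just a mild reorganization of that same argument.
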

\begin{proof}
Fix some $\varepsilon<\delta$ (where $\delta$ is given in Proposition \ref{prop:ShortGeodesics}) and assume that $\sV_{g,2n}^\sigma$ and $\sU_{g,2n}^\sigma$ are defined using this $\varepsilon$.  Since $\mathscr{X}$ and $\mathscr{Y}$ are in $\Teich_{g,2n}^\sigma - \Teich_{g,2n}^\sigma(\varepsilon)$, there exists $\sigma$-orbits of isotopy classes of essential simple closed curves, say $a^\sigma$ and $b^\sigma$, such that $\ell_{\mathscr{X}}(a) < \varepsilon$ for all $a \in a^\sigma$ and $\ell_{\mathscr{Y}}(b)< \varepsilon$ for all $b \in b^\sigma$.  Since $\sC^\sigma(S)$ is connected, there exists a sequence of $\sigma$-orbits of isotopy classes of $\sigma$-invariant essential simple closed curves
\[ a^\sigma = c_1^\sigma, \dots, c_m^\sigma = b^\sigma\]
such that the geometric intersection number $i(c_i,c_{i+1}) = 0$ for each $c_{i} \in c_i^\sigma$ and each $c_{i+1} \in c_{i+1}^\sigma$ and for all $i$.

Choose Fenchel--Nielsen coordinates such that $c_1^\sigma$ and $c_2^\sigma$ are in the corresponding pants decomposition and their lengths give the first two coordinates when $\Teich_{g,2n}^\sigma$ is identified with $\IR^{3g-3+2n}$ under this choice of coordinates.  Suppose that $\mathscr{X}$ is given by the tuple $(x_1,x_2,x_3,\dots,x_{3g-3+2n})$ under this choice of coordinates.

Consider the path $\alpha_2: I \to \Teich_{g,2n}^\sigma - \Teich_{g,2n}^\sigma(\varepsilon) \subset \Teich_{g,2n}^\sigma \cong \IR^{3g-3+2n}$ given by
\[ \alpha_2(t) = \left(x_1, (1-t)x_2+\frac{\varepsilon}{42}(t),x_3,\dots,x_{3g-3+2n}\right).\]
By construction $\alpha_2(0) = \mathscr{X}$ and $\alpha_2(1) = \mathscr{X}_2$ such that $\ell_{\mathscr{X}_2}(c_2)<\varepsilon$ for each $c_2 \in c_2^\sigma$.  Repeating this procedure for $c_2^\sigma$ and $c_3^\sigma$ and so on, we obtain paths $\alpha_2,\dots,\alpha_m$.  Concatenating these paths, we obtain a path $\alpha = \alpha_2\cdots \alpha_m$\footnote{We denote path concatenation/multiplication from left to right.} in $\Teich_{g,2n}^\sigma - \Teich_{g,2n}^\sigma(\varepsilon)$ where $\alpha(0) = \mathscr{X}$ and $\alpha(1) = \mathscr{X}_m$ such that $\ell_{\mathscr{X}_m}(b)<\varepsilon$ for each $b \in b^\sigma$.  Selecting Fenchel--Nielsen coordinates such that $b^\sigma$ is in the corresponding pants decomposition, we may take the straight-line path, denoted $\beta$, from $\mathscr{X}_m$ to $\mathscr{Y}$ under this choice of coordinates.  The concatenation $\alpha\beta$ produces the desired path described in the statement of the proposition.
\end{proof}

\begin{rem}\label{rem:ConnDiffState}
One may interpret Lemma \ref{lem:ConnInf} as follows: If the images of $\mathscr{X}$ and $\mathscr{Y}$ in $\sM_{g,2n}^\sigma$ lie in $\sU_{g,2n}^\sigma$, then there exists a path in $\Teich_{g,2n}^\sigma$ from $\mathscr{X}$ to $\mathscr{Y}$ whose image in $\sM_{g,2n}^\sigma$ lies in $\sU_{g,2n}^\sigma$.
\end{rem}

\begin{rem}
The assumption that $\sC^\sigma(S)$ is connected is essential to prove Lemma \ref{lem:ConnInf}.  Barring a finite number of cases, $\sC^\sigma(S)$ is connected.  More precisely, if $\Fix(\sigma)$ is empty, then $\sC^\sigma(S)$ is connected if and only if $g \geq 3$.  This follows from the isomorphism between $\sC^\sigma(S)$ and $\sC(S/\sigma)$ and the conditions for $\sC(S/\sigma)$ to be connected.  If $\Fix(\sigma)$ is non-empty, then it is easy to see that $\sC^\sigma(S)$ is connected.
\end{rem}

We introduce the following notation for the remainder of this paper:
\begin{align*}
& T := E \Mod_{g,2n}^\sigma \times {\Teich}_{g,2n}^\sigma,
\\ & M := E \Mod_{g,2n}^\sigma \times_{\Mod_{g,2n}^\sigma} {\Teich}_{g,2n}^\sigma,
\\ & V := E \Mod_{g,2n}^\sigma \times \sV_{g,2n}^\sigma,
\\ & U := E \Mod_{g,2n}^\sigma \times_{\Mod_{g,2n}^\sigma} \sV_{g,2n}^\sigma.
\end{align*}

Now we give the proof of Theorem \ref{thm:OrbiHomotopyGroups}.

\begin{proof}
Since $T \to M$ and $V \to U$ are coverings, we have isomorphisms
\[\pi_m(T) \cong \pi_m(M) \gap \mbox{and} \gap \pi_m(V) \cong \pi_m(U)\]
for each $m \geq 2$.  By Theorem \ref{thm:ABCHomotopyType} and Lemma \ref{lem:PropOfThick}, we have that $V$ is homotopy equivalent to a wedge of spheres of dimension $d$, where $d$ denotes the homological dimension of $\abc(S/\sigma)$.  Since $T$ is contractible, it follows that $\pi_m(M)$ and $\pi_m(U)$ are trivial for $2\leq m < d$.

It remains to show that the inclusion $U \hookrightarrow M$ induces an isomorphism on fundamental groups.  Let $\alpha$ be a loop in $M$ with base point in $U$.  We may lift $\alpha$ to a path $\tilde{\alpha}$ in $T$ with end points lying in $V$.  By Lemma \ref{lem:ConnInf}, there exists a path $\tilde{\beta}$ in $V$ such that $\tilde{\beta}(0) = \tilde{\alpha}(0)$ and $\tilde{\beta}(1) = \tilde{\alpha}(1)$.  Since $T$ is simply-connected, there exists a homotopy from $\tilde{\alpha}$ to $\tilde{\beta}$ that leaves the end points fixed.  This homotopy descends to a homotopy from $\alpha$ to the projection of $\tilde{\beta}$, which by construction lies entirely in $U$ (see Remark \ref{rem:ConnDiffState}).  By definition, we have that the inclusion $U \hookrightarrow M$ induces an isomorphism on fundamental groups.  This completes the proof of the theorem.
\end{proof}

\bibliographystyle{plain}
\bibliography{VCDRACBib.bib}

\end{document}